\documentclass[sn-mathphys-num]{sn-jnl}% Math and Physical Sciences Numbered Reference Style 
\usepackage{multirow}
\usepackage{booktabs}
\usepackage{graphicx}%
\usepackage{multirow}%
\usepackage{float}
\usepackage{amsmath,amssymb,amsfonts}%
\usepackage{amsthm}%
\usepackage{mathrsfs}%
\usepackage[title]{appendix}%
\usepackage{xcolor}%
\usepackage{textcomp}%
\usepackage{hyperref}%
\usepackage{xr-hyper}%
\usepackage{manyfoot}%
\usepackage{booktabs}%
\usepackage{algorithm}%
\usepackage{algorithmicx}%
\usepackage{algpseudocode}%
\usepackage{listings}%
\usepackage{multirow}
\usepackage{booktabs}
\usepackage{tabularx}
\usepackage{caption} % 加载 caption 宏包
\usepackage{longtable}
\DeclareMathOperator*{\argmin}{arg\,min}
\theoremstyle{thmstyleone}%
\newtheorem{theorem}{Theorem}%  meant for continuous numbers

\theoremstyle{thmstyletwo}%

\theoremstyle{thmstylethree}%

\begin{document}

\title[Article Title]{Incorporating Continuous Dependence Qualifies Physics-Informed Neural Networks for Operator Learning}

%%=============================================================%%
%% GivenName	-> \fnm{Joergen W.}
%% Particle	-> \spfx{van der} -> surname prefix
%% FamilyName	-> \sur{Ploeg}
%% Suffix	-> \sfx{IV}
%% \author*[1,2]{\fnm{Joergen W.} \spfx{van der} \sur{Ploeg} 
%%  \sfx{IV}}\email{iauthor@gmail.com}
%%=============================================================%%

\author[1]{\fnm{Guojie}\sur{Li}}

%\author[2,3]{\fnm{Sheng}\sur{Ran}}\email{ransheng@ruc.edu.cn}

\author*[2]{\fnm{Wuyue}\sur{Yang}}\email{yangwuyue@bimsa.cn}

\author*[1]{\fnm{Liu}\sur{Hong}}\email{hongliu@sysu.edu.cn}
\affil[1]{\orgdiv{School of Mathematics}, \orgname{Sun Yat-sen University}, \orgaddress{\city{Guangzhou}, \postcode{510275}, \country{China}}}

%\affil[2]{\orgdiv{Institute of Statistics and Big Data}, \orgname{Renmin University of China}, \orgaddress{\street{No. 59, Zhongguancun Street, Haidian District}, \city{Beijing}, \postcode{100872}, \state{Beijing}, \country{China}}}

\affil[2]{\orgdiv{Beijing Institute of Mathematical Sciences and Applications}, \orgaddress{\city{Beijing}, \postcode{101408}, \country{China}}}

%%==================================%%
%% Sample for unstructured abstract %%
%%==================================%%

\abstract{Physics-informed neural networks (PINNs) have been proven as a promising way for solving various partial differential equations, especially high-dimensional ones and those with irregular boundaries. However, their capabilities in real applications are highly restricted by their poor generalization performance. Inspired by the rigorous mathematical statements on the well-posedness of PDEs, we develop a novel extension of PINNs by incorporating the additional information on the continuous dependence of PDE solutions with respect to parameters and initial/boundary values (abbreviated as cd-PINN). Extensive numerical experiments demonstrate that, with limited labeled data, cd-PINN achieves 1-3 orders of magnitude lower in test MSE than DeepONet and FNO. Therefore, incorporating the continuous dependence of PDE solutions provides a simple way for qualifying PINNs for operator learning. }

\keywords{Physics-Informed Neural Networks, Parameterized Partial Differential Equations, Continuous Dependence, Operator Learning}

%%\pacs[JEL Classification]{D8, H51}

%%\pacs[MSC Classification]{35A01, 65L10, 65L12, 65L20, 65L70}

\maketitle

\section{Introduction}\label{sec_introduction}
In many fields of science and engineering, such as computational fluid dynamics, climate prediction, medical imaging, and game simulation, it is often necessary to repeatedly solve complex partial differential equations (PDEs) with varied initial/boundary values and parameter configurations \cite{richard2008methods, raissi2020hidden, tartakovsky2020physics, goswami2020transfer, hennigh2021nvidia}. Traditional numerical solvers, including finite difference (FD), finite element (FE), and finite volume (FV) methods \cite{leveque2007finite, ZIENKIEWICZ20131, leveque2002finite, hughes1987finite}, rely on discretization and thus face an inherent trade-off between accuracy and computational cost: fine grids ensure accuracy but are time-consuming, while coarse grids improve efficiency at the expense of precision. This limitation makes it challenging to achieve both real-time performance and high fidelity in applications, highlighting the need for a more efficient PDE solver.

Deep-learning-based differential equation solvers are generally considered to have a significant potential to improve computational efficiency \cite{esmaeilzadeh2020meshfreeflownet, kochkov2021machine}. One notable method is the physics-informed neural network \cite{raissi2019physics}, which integrates knowledge of governing equations into the loss function to train the model and approximates the solution to PDEs without discretizing the solution domain. Alternative approaches include Deep Galerkin method \cite{sirignano2018dgm}, Deep Ritz method \cite{yu2018deep} and many others \cite{jagtap2020conservative, jagtap2020extended, yu2022gradient}. However, these approaches treat each variation in parameters or initial/boundary values as an independent task, requiring costly retraining. To overcome this limitation, operator-learning methods emerged, including DeepONet \cite{lu2021learning}, Neural Operator\cite{kovachki2023neural}, and Fourier Neural Operators \cite{li2020fourier}, which learn mappings between functional spaces (see Fig.\ref{fig_diagram} A) and enable rapid inference for unseen configurations. While their physics-enhanced variants, such as PI-DeepONet \cite{wang2021learning} and PINO \cite{li2024physics}, partially mitigate the heavy demand for labeled data by incorporating residual losses, data efficiency remains a challenge. More recently, meta-learning-based PINNs have been proposed to improve generalization across parameterized PDEs \cite{finn2017model, antoniou2018train, nichol2018reptile}. They treat variations in parameters and initial/boundary conditions as subtasks within a unified framework, enabling the model to leverage knowledge across related tasks. Feedforward meta-PINNs, such as Hyper-PINN and Meta-MgNet \cite{de2021hyperpinn, chen2022meta}, directly map equation configurations to PINN weights, whereas MAML-based methods, including MAD-PINN (Fig.\ref{fig_diagram} B) and reptile-based PINNs \cite{huang2022meta, liu2022novel}, focus on learning initialization states that adapt efficiently to new tasks. Despite these advances, meta-learning PINNs often suffer from long training time and computationally intensive fine-tuning when applied to a wide range of new configurations.

\begin{figure}[H]
    \centering
    \includegraphics[width=1.0\linewidth]{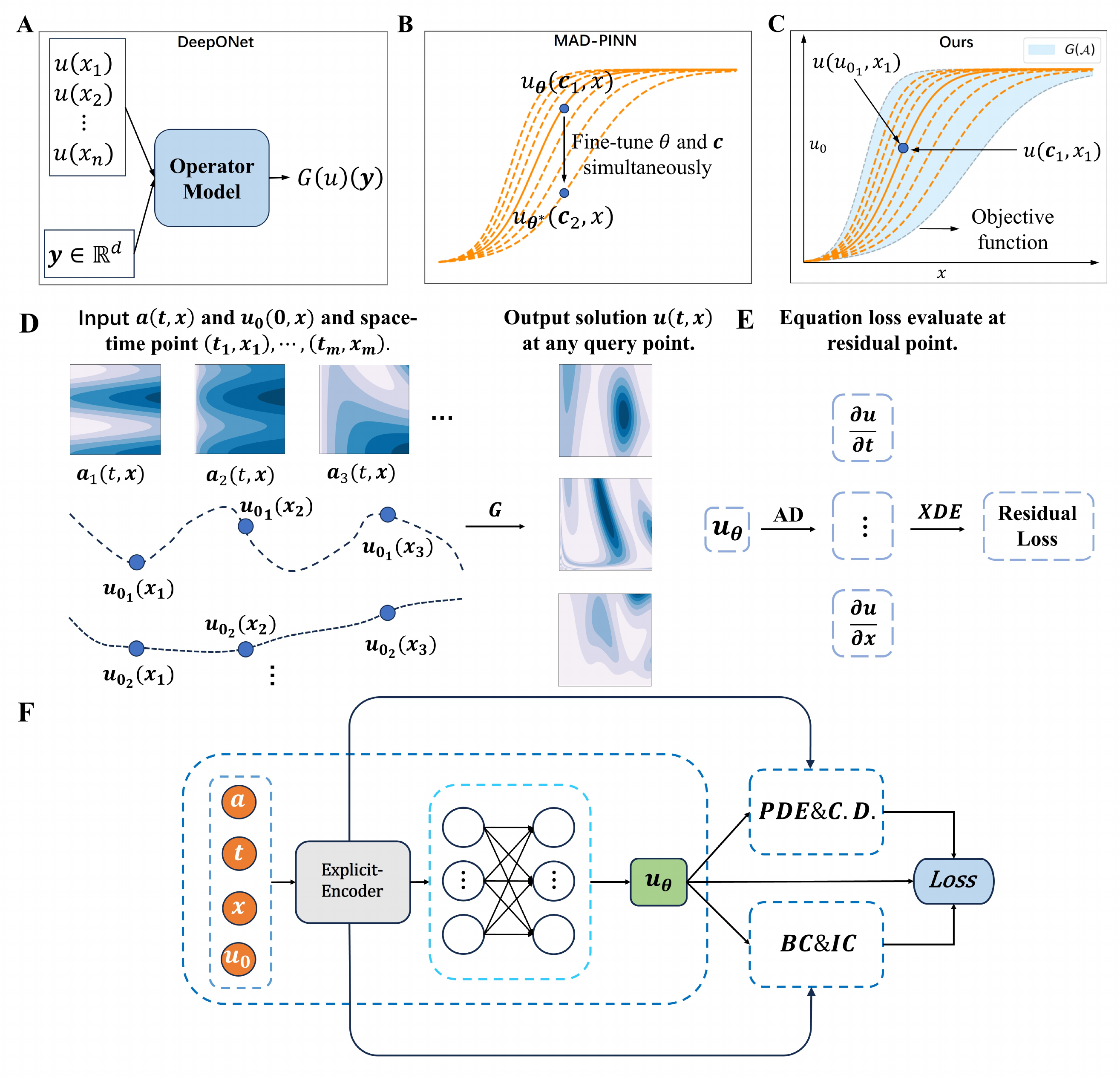}
    \caption{\textbf{Illustration of the idea, problem setup, and architecture of cd-PINN.} (\textbf{A}) Schematic diagram of the input and output of neural operators. (\textbf{B}) MAD learns the solution of the equation under new configurations by fine-tuning the encoding $c$. (\textbf{C}) The objective function of cd-PINN is based on the continuity assumption. (\textbf{D}) Illustration of the labeled training data. For each output $u(t_i, \boldsymbol{x}_i)$, we require the same number of evaluations of $a(t_i, \boldsymbol{x}_i)$ and $u_0(\boldsymbol{x}_i)$ at the same scattered space-time point $(t_i, \boldsymbol{x}_i)$. (\textbf{E}) The flowchart for calculating residual loss. (\textbf{F}) The architecture of cd-PINN.}
    \label{fig_diagram}
\end{figure}

The continuous dependence of PDE solutions on the initial/boundary values and parameters is one of the fundamental requirements for the well-posedness of PDEs\cite{evans2022partial}, yet it is often overlooked in solving parameterized PDEs. To address the challenges of data inefficiency and repeated training in deep-learning-based solvers, we propose the continuous dependence physics-informed neural network (cd-PINN). By using an appropriate encoding $c$ for each configuration (e.g. $u_0$), cd-PINN captures a higher-dimensional solution space $G(\mathcal{A})$ (shown in Fig.\ref{fig_diagram} C) that continuously depends on both the encoding $c$ and the spatial coordinate $x$. This design yields substantially improved data efficiency compared with models like DeepONet and FNO, while avoiding the retraining or fine-tuning typically required by PINN variants at the same time. The numerical results demonstrate the outstanding accuracy, reliability, and practicality of our proposed model for solving large-scale parameterized partial differential equations in real-time. 

\section{Results and Discussion}\label{sec_results}
To validate the outstanding performance of cd-PINN, we conducted numerical experiments on five representative PDEs whose solutions exhibit continuous dependence on their parameters or initial values. These include the three fundamental types of second-order PDEs, including the diffusion, wave, and Poisson equations, as well as high-dimensional diffusion-reaction equations, Burgers, and Navier-Stokes equations. In these examples, we demonstrate the superiority of our method over mainstream operator learning frameworks on diffusion and wave equations, analyze the effect of the loss function $\mathcal{L}_{cd}$ on residual-point density in Poisson equation and on PDE dimensionality in high-dimensional diffusion-reaction equations, and benchmark the efficiency and accuracy of cd-PINN against the traditional finite difference method on the Burgers equation. We also examine the diffusion equation in a setting where the theoretical guarantee of continuous dependence on the initial condition no longer holds. Finally, the proposed model was applied to the tau protein aggregation dynamics in Alzheimer's disease based on both real and synthetic data. In what follows, the negative logarithm of the mean absolute error (NLMAE) is adopted for performance assessment, which is defined as:
\begin{equation}
    \text{NLMAE} = -\log_{10}\Big(\frac{1}{n}\sum_{i=1}^{n}\big|u_{true}^{i}-u_{pred}^{i}\big|\Big).
\end{equation}

\subsection{Benchmarking Against State-of-the-Art Operator Learning Frameworks}\label{sec_dif_wave}
To ensure a fair comparison, we evaluate all models on the same example by fixing the form of the initial value while varying its coefficients to test the predictive performance under continuously changing inputs. First, we examine the 1D diffusion equation, which is described by
\begin{equation}
    \frac{\partial p}{\partial t} - D \frac{\partial^2 p}{\partial x^2} = 0, \; x\in[-10.0, 10.0], t\in [0.1, 1.1]
    \label{eq_diffusion}
\end{equation}
with initial condition $p(x, 0) = \frac{10}{\sqrt{2\pi}\sigma}\exp\Big(-\frac{(x-\mu)^2}{2\sigma^2}\Big)+\frac{10}{\sqrt{2\pi}\sigma_1}\exp\Big(-\frac{(x-\mu_1)^2}{2\sigma_1^2}\Big).$ In practice, we fix $\sigma_1=1.0, \mu_1=5.0$, use $20$ labeled data with $\sigma=1.0, \mu=-5.0$, and $2^{14}$ residual data points to train the model. The test data consists of $101,000$ sets of solutions corresponding to values of $\sigma \in [0.1, 10.0]$ and $\mu \in [-5.0, 5.0]$, totaling $8,080,000$ data points.

We conduct a comprehensive validation of our proposed models against existing baseline models, including FNO and DeepONet. As shown in Fig. \ref{fig_Case_1}A, we compared the test MSE of each model and tracked its evolution across training epochs with only 20 labeled training data points. The results show that the predictions of DeepONet and FNO are significantly worse than those of cd-PINN, PINN, and cd-PINN$^{\#}$, where PINN means no additional differentiability loss $\mathcal{L}_{cd}$ is added, while cd-PINN$^{\#}$ means no explicit encoding is used, but rather encoding is performed using a deep-learning-based encoder (see Supplementary Information). In practice, we experimented with both a MLP-based encoder and an encoder-only Transformer \cite{radford2019language} to encode $a(t, \boldsymbol{x})$ and $u_0(\boldsymbol{x})$, and found the former shows a better performance. In Supplementary Information, we give the results of DeepONet and FNO when labeled training data is 20, 100, 1000, and 10000, respectively, to ensure the correctness of our reproduction model. Panel B in Fig.~\ref{fig_Case_1} shows the NLMAE of DeepONet, PINN, cd-PINN and cd-PINN$^{\#}$ in the phase space defined by parameters $\mu$ and $\sigma$. Each point represents the negative logarithm of the mean absolute error between the predicted solution and the true solution for the corresponding $(\mu, \sigma)$. DeepONet, constrained by sparse labeled data and the lack of continuity exploitation with respect to initial values, exhibits significantly lower predictive accuracy compared with cd-PINN$^{\#}$. Compared with cd-PINN$^{\#}$, if we know the specific form of the initial condition $p_0(x)$, the NLMAE of PINN can be improved by an order of magnitude, thanks to the inclusion of physical information. By further incorporating the differentiability constraint $\mathcal{L}_{cd}$ on the solution coefficients into PINN, the resulting model not only achieves a higher NLMAE overall but also significantly improves its performance in regions where PINN performs poorly -- particularly for small $\sigma$. In Fig.~\ref{fig_Case_1}C, we present the predicted solutions for a new configuration $(\sigma=0.2, \mu=0.0)$ unseen during labeled training data. While DeepONet shows a marked deviation from the ground truth, the cd-PINN and cd-PINN$^{\#}$ models demonstrate superior generalization by accurately recovering the underlying solution morphology.

To make a further verification, we consider the 2D wave system as a special case of the general hyperbolic equations, which is described by
\begin{equation}
    \frac{\partial^2 u}{\partial t^2} = c^2\Big(\frac{\partial^2 u}{\partial x^2} + \frac{\partial^2 u}{\partial y^2}\Big), \; x\in[0, 1], \; y\in [0, 1], \; t\in [0, 0.5],
\end{equation}
with initial condition and boundary conditions
\begin{equation}
    \begin{split}
        & u(x, y, t=0)=10\sin(k x) \sin(k y),\; u_t(x, y, t=0)=0,\; x\in [0, 1], \; y\in[0,0.5],\\
        & u(x=0, y, t) = 0, \; u(x=1, y, t) = 10\sin(k)\sin(ky)\cos(\sqrt{2}ckt), \; y\in[0, 1], \; t\in [0, 0.5],\\
        & u(x, y=0, t) = 0, \; u(x, y=1, t) = 10\sin(kx)\sin(k)\cos(\sqrt{2}ckt), \; x \in [0, 1], \; t\in [0, 0.5].
    \end{split}
\end{equation}

In practice, we use $20$ labeled data with $c=0.505, k=0.505$ (see Fig.~\ref{fig_Case_1}G) and $2^{13}$ residual data points to train the model. The test data consists of $441$ sets of solutions corresponding to values of $c\in [0.01, 1.0]$ and $k\in [0.01, 1.0]$, totaling over $4$ million data points. To ensure the accuracy of the test, both the training data and the test data are generated with a true solution of the equation. The specific data settings are summarized in Table \ref{tab_diffusion_wave}.

\begin{table}[htbp]
	\centering
	\caption{Summary on the setup and results of   diffusion, wave, Poisson, and diffusion-reaction equations}
	\label{tab_diffusion_wave}
	\begin{tabular}{ccccccc}
		\toprule
		Equation & Model & Params & Configurations & Labeled & Residual& NRMSE\\
			\midrule
			\multirow{5}*{Diffusion} & cd-PINN & $\approx 8\times 10^{4}$ & 1 & 20 & 16384 & $5.24\times 10^{-3}$\\
			\cmidrule{2-7}
			& PINN & $\approx 8\times 10^{4}$ & 1 & 20 & 16384 & $2.07\times 10^{-2}$\\
			\cmidrule{2-7}
			& cd-PINN$^{\#}$ & $\approx 9\times 10^{4}$ & 1 & 20 & 16384 & $8.40\times 10^{-2}$\\
			\cmidrule{2-7}
			&DeepONet & $\approx 4\times 10^{4}$ & 1 & 20 & $-$ & $7.80\times 10^{-1}$\\
			\cmidrule{2-7}
			& FNO & $\approx 2\times 10^{5}$ & 1 & 20 & $-$ & $1.98\times 10^{-1}$\\
			\midrule
			\multirow{5}{*}{Wave} & cd-PINN & $\approx 8\times 10^{4}$ & 1 & 20 & 8192 & $1.20\times 10^{-3}$\\
			\cmidrule{2-7}
			& cd-PINN$^{\#}$ & $\approx 1\times 10^{5}$ & 1 & 20 & 8192 & $1.22\times 10^{-2}$\\
			\cmidrule{2-7}
			& PI-DeepONet & $\approx 1\times 10^{5}$ & 1 & 20 & 8192 & $3.43\times 10^{-2}$\\
			\cmidrule{2-7}
			& DeepONet & $\approx 1\times 10^{5}$ & 100 & 20000 & $-$ & $1.06\times 10^{-1}$\\
			\cmidrule{2-7}
			& FNO & $\approx 9\times 10^{4}$ & 100 & 20000 & $-$ & $6.58\times 10^{-1}$\\
            \midrule
		\multirow{2}{*}{Poisson} &     cd-PINN & $\approx 8\times     10^{4}$ & 1 & 20 & 2048 &     $2.68\times 10^{-3}$\\
            \cmidrule{2-7}
            & PINN & $\approx 8\times 10^{4}$ & 1 & 20 & 2048 & $2.14\times 10^{-2}$\\
            \midrule
            \multirow{2}{*}{D-R 2d} & cd-PINN & $\approx 8\times 10^{4}$ & 1 & 100 & 8192 & $2.12\times 10^{-3}$\\
            \cmidrule{2-7}
            & PINN & $\approx 8\times 10^{4}$ & 1 & 100 & 8192 & $1.79\times 10^{-2}$\\
            \midrule
            \multirow{2}{*}{D-R 5d} & cd-PINN & $\approx 8\times 10^{4}$ & 1 & 100 & 8192 & $5.36\times 10^{-3}$\\
            \cmidrule{2-7}
            & PINN & $\approx 8\times 10^{4}$ & 1 & 100 & 8192 & $1.52\times 10^{-2}$\\
            \midrule
            \multirow{2}{*}{D-R 8d} & cd-PINN & $\approx 8\times 10^{4}$ & 1 & 100 & 8192 & $1.61\times 10^{-1}$\\
            \cmidrule{2-7}
            & PINN & $\approx 8\times 10^{4}$ & 1 & 100 & 8192 & $1.85\times 10^{-1}$\\
		\bottomrule
	\end{tabular}
\end{table}

\begin{figure}[H]
    \centering
    \includegraphics[width=1.0\linewidth]{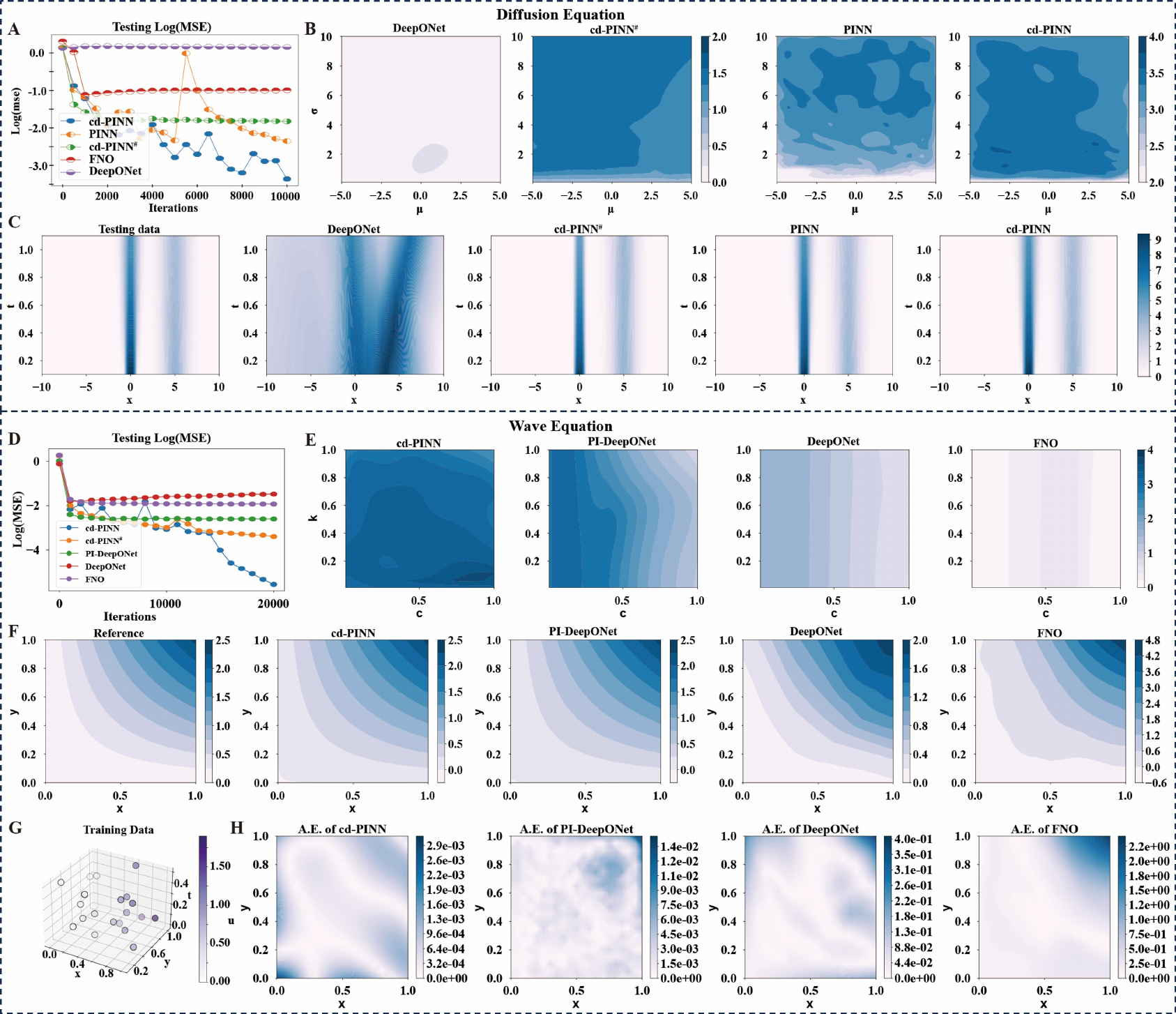}
    \caption{\textbf{Results of the parameterized diffusion and wave equations.} (\textbf{A}) The test MSE of cd-PINN, PINN, cd-PINN$^{\#}$, FNO, and DeepONet on the test dataset as the number of training epochs increases for the parameterized diffusion equation. (\textbf{B}) The NLMAE of predictions, (\textbf{C}) predictions on new configurations $\sigma=0.2, \mu=0.0$ without labeled training data, and (\textbf{D}) the test MSE as a function of the number of training epochs. (\textbf{E}) The NLMAE,   (\textbf{F}) predictions, and (\textbf{H}) absolute errors of cd-PINN, PI-DeepONet, DeepONet, and FNO for parameterized wave equation at $c=0.505, k=0.505$. (\textbf{G}) 20 labeled training data points randomly selected from the low-resolution with $c=0.505, k=0.505$.}
    \label{fig_Case_1}
\end{figure}

As shown in Fig.~\ref{fig_Case_1}D, after $20,000$ training epochs, cd-PINN$^{\#}$ exhibits a test MSE nearly one order of magnitude lower than DeepONet and FNO, while cd-PINN achieves a test MSE approximately two orders of magnitude lower than PI-DeepONet. Compared with FNO and DeepONet, cd-PINN and PI-DeepONet have significantly improved in NLMAE due to the addition of equation information (see  Fig. \ref{fig_Case_1}E). It is worth noting that FNO shows the worst performance, which might be attributed to the fact that FNO is suitable for learning the solution from parameters to a specific time moment, while other models can learn the solution mapping of the entire time region, in contrast. To align with other models, we also use time $t$ as the input to the FNO. Compared to PI-DeepONet, cd-PINN exhibits a notable advantage in NLMAE across a wide range of parameters and time regions, owing to the inclusion of constraints on the continuous dependence of the solution on the parameters. In addition, we also compared the two models in NRMSE, as shown in Table \ref{tab_diffusion_wave}, cd-PINN ($1.20\times 10^{-3}$) outperforms PI-DeepONet ($3.43\times 10^{-2}$) by an order of magnitude.

In addition to comparing the four models globally, we also made a comparison between the predicted solutions and true solutions of the four models under the parameter $c=0.505, k=0.505$, which is used for generating training data. Fig.~\ref{fig_Case_1}F gives a high-resolution true solution and the predicted solutions of cd-PINN, PI-DeepONet, DeepONet, and FNO from left to right, while their absolute errors are shown separately in Fig.~\ref{fig_Case_1}H. Again, FNO gives the worst prediction results. PI-DeepONet, cd-PINN, and DeepONet can all learn the overall shape of the solution, but in terms of the absolute error, DeepONet is much worse than both cd-PINN and PI-DeepONet. Moreover, cd-PINN has a significant improvement over PI-DeepONet in the maximum absolute error.

\subsection{Effect of Residual Point Density}
The density of residual points is a key factor influencing the performance of physics-informed models. As noted in \cite{yu2022gradient}, incorporating the equation gradient into the loss can enhance the model's accuracy even with a few residual points. The primary distinction between cd-PINN and PINN lies in the inclusion of the $\mathcal{L}_{cd}$ term. Here, we investigate how residual point density affects both models and assess the role of $\mathcal{L}_{cd}$ under low-density conditions. Consider a 2D Poisson equation, which is a representative of general elliptic equations, specified as
\begin{equation}
    u_{xx} + u_{yy} = -(a^2 + b^2) \cos(ax)\sin(by), \; x\in [0, \pi], \; y \in [0, \pi],
\end{equation}
with boundary condition
\begin{equation}
    \begin{split}
        u(x=0, y) &= \sin(by), \; y \in [0, \pi],\\
        u(x=\pi, y) &= \cos(a\pi)\sin(by), \; y \in [0, \pi],\\
        u(x, y=0) &= 0, \; x \in [0, \pi],\\
        u(x, y=\pi) &= \cos(ax)\sin(b\pi), \; x\in [0, \pi].
    \end{split}
\end{equation}
In practice, we use $20$ labeled data with $a=2.45, b=2.45$ (see Fig. \ref{fig_Case_2}E) and $2^{11}$ residual data points to train the model. The test data consists of $250$ sets of solutions corresponding to values of $a \in [0.0, 5.0]$ and $b \in [0.0, 5.0]$, totaling over 2 million data points.

In Fig.~\ref{fig_Case_2}A, the percentage stacked bar chart shows that cd-PINN achieves a lower NRMSE than PINN across different numbers of residual points, with the gap narrowing as the point density increases. Fig.~\ref{fig_Case_2}B illustrates that, with $2^{11}$ residual points, cd-PINN maintains a clear advantage in test MSE during training. This improvement may be attributed to the inclusion of $\mathcal{L}_{cd}$, which prevents the model from being trapped in a local minimum (Fig. \ref{fig_Case_2}C). The benefit of $\mathcal{L}_{cd}$ is also evident in Fig. \ref{fig_Case_2}D, where cd-PINN achieves higher NLMAE across a broad range of parameter configurations. The advantage becomes more pronounced for larger values of $a$ and $b$, corresponding to more complex, high-periodicity solutions. To illustrate this, we compare the true solution (Fig. \ref{fig_Case_2}F), and the predicted solutions of cd-PINN and PINN (Fig.\ref{fig_Case_2} G) when $a=5.0, b=5.0$. The latter leads to lower absolute errors by an order of magnitude (Fig.\ref{fig_Case_2}H).

\begin{figure}[H]
    \centering
    \includegraphics[width=1.0\linewidth]{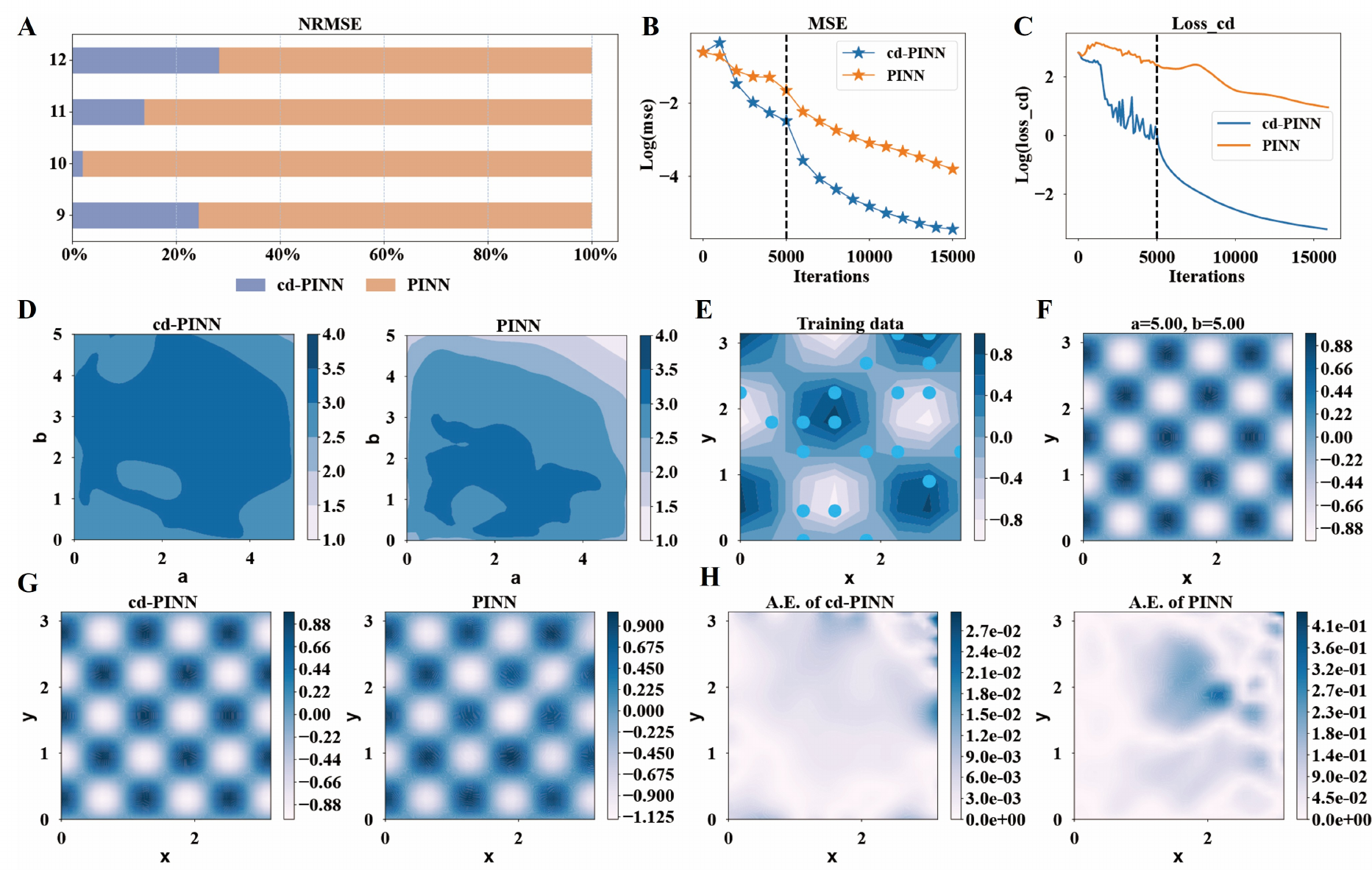}

    \caption{\textbf{Results of the parameterized Poisson equation.} (\textbf{A}) Comparison of the NRMSE between cd-PINN and PINN with respect to different numbers of residual points, where the vertical axis represents the number of $2^N$ residual data points used. (\textbf{B}) The test MSE and (\textbf{C}) $\mathcal{L}_{cd}$ term of cd-PINN and PINN as the number of training epochs changes when the number of residual points is fixed as $2^{11}$. (\textbf{D}) The NLMAE of predictions of cd-PINN and PINN. (\textbf{E}) 20 labeled training data points randomly selected from the low-resolution data when $a=2.45, b=2.45$. (\textbf{F}) The high-resolution true solution of the equation at new configurations $a=5.0, b=5.0$. (\textbf{G}) The predicted results and (\textbf{H}) absolute errors of cd-PINN and PINN at configurations $a=5.0, b=5.0$.}
    
    \label{fig_Case_2}
\end{figure}

\subsection{Influence of PDE Dimensionality}

One of the major strengths of physics-informed models is their ability to handle high-dimensional problems. Here, we investigate the impact of PDE dimensionality on the performance of cd-PINN and PINN using the d-dimensional diffusion-reaction equation with a negative linear term accounting for degradation.
\begin{equation}
    \frac{\partial u}{\partial t} = D\nabla^2u - \lambda u, x\in [0.0, 0.1]^{d}, t\in [0.0, 0.1],
\end{equation}
whose initial condition reads
\begin{equation}
    u(\boldsymbol{x}, 0) = \frac{1}{(2\pi \sigma^2)^{\frac{d}{2}}}\exp\Big(-\frac{\sum_{i=1}^{d}x_i^2}{2\sigma^2}\Big).
\end{equation}
For each $d=2,5,8$, we fixed $100$ labeled data with $D=0.06, \lambda=0.06$ and $2^{13}$ residual data points to train the model. The test data consists of $100$ sets of solutions corresponding to values of $D\in [0.01, 0.1]$ and $\lambda \in [-0.1, -0.01]$.

\begin{figure}[htbp]
    \centering
    \includegraphics[width=1.0\linewidth]{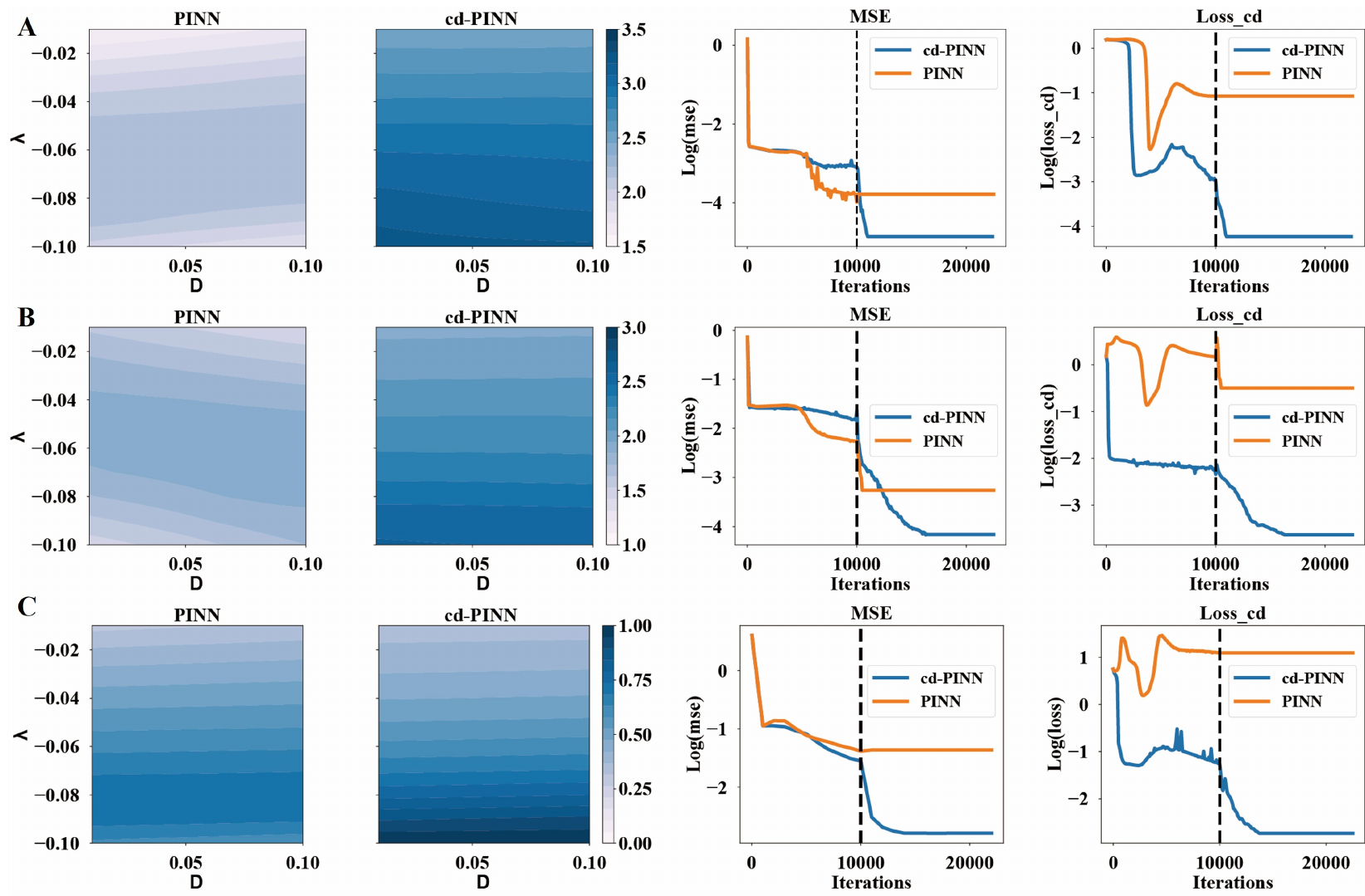}
    \caption{\textbf{Results of the parameterized diffusion-reaction equation.} \textbf{(A)}, \textbf{(B)}, and \textbf{(C)} show results for 2D, 5D, and 8D diffusion-reaction equations, respectively. For each dimension, from left to right: NLMAE of PINN and cd-PINN, test MSE, and $\mathcal{L}_{cd}$ versus training iterations.}
    \label{fig_Case_3}
\end{figure}

Fig.~\ref{fig_Case_3} A-C show the results of the cd-PINN and PINN models when applied to the diffusion-reaction equation in 2d, 5d, and 8d, respectively. The first and second columns display the NLMAE of cd-PINN and PINN, respectively. In all dimensions, the incorporation of the loss term $\mathcal{L}_{cd}$ results in a higher NLMAE for cd-PINN than that of PINN. However, since the number of residual points is fixed at $2^{13}$ across all cases, the model performance progressively degrades with the increase of dimension due to insufficient sampling of the domain. This highlights a major limitation of PINN-based models: although they can be extended to high-dimensional problems, a large number of residual points are required, and the GPU memory is extremely demanding. The third column illustrates how the test MSE of cd-PINN and PINN evolves over training iterations. The black dashed line represents the transition point between the epochs optimized by the Adam optimizer and those optimized by the LBFGS optimizer. As the dimensionality increases, the number of residual points shifts from sufficient to insufficient, resulting in an increasing performance gap between cd-PINN and PINN. The fourth column illustrates how the loss term $\mathcal{L}_{cd}$ changes during training for both models. Although $\mathcal{L}_{cd}$ is not included in the loss function of PINN, we compute and record it for comparison. In all dimensions, the value of $\mathcal{L}_{cd}$ in PINN converges to a higher level, and this convergence value increases with dimensionality.

\subsection{Comparison to FDM}

The Burgers equation is a widely used PDE in fluid mechanics, such as in the modeling of nonlinear waves and turbulence. As the viscosity coefficient goes to zero ($\nu\rightarrow0$), the solution develops shocks. To compare the computational efficiency and accuracy with traditional FDM, we consider the 1D Burgers equation
\begin{equation}
    \frac{\partial u}{\partial t} + u\frac{\partial u}{\partial x} = \nu \frac{\partial ^ 2 u}{\partial x^2}, \; x\in [-1.0, 1.0], \; t\in [0.0, 0.5],
\end{equation}
with initial and boundary conditions
\begin{equation}
    \begin{split}
        &u(x, 0) = -\sin(\pi x), \; x\in [-1.0, 1.0],\\
        &u(-1, t) = u(1, t)=0, \; t \in [0.0, 0.5].
    \end{split}
\end{equation}
We use this benchmark problem to evaluate the effectiveness and computational efficiency of the proposed cd-PINN model by direct comparison with the Newton-Implicit FDM. Specifically, we use 20 labeled data samples from $200\times 200$ numerical solution of Newton-Implicit FDM corresponding to $\nu=0.05$ and $2^{13}$ residual data points. The test data consist of $40$ solution instances corresponding to values of $\nu \in [0.01, 0.1]$, each with a resolution of $200\times 200$. Details of the Newton-Implicit FDM implementation and data generation process are provided in Supplementary Information.

To verify whether cd-PINN can effectively learn the solution of the equation as $\nu$ varies, we present the NLMAE of each pair $(t, \nu)$ in Fig.\ref{fig_Case_4} A. Overall, the mean absolute error of the predicted solution increases over time or as $\nu$ decreases. This aligns with our intuitions that the smaller $\nu$ is more prone to the formation of shocks. When $t>0.2$, the solution transits from a smooth profile to a viscous shock, making it increasingly difficult for the MLP-based model to fit. This explains why minimum NLMAE appears around $t\approx0.22, \nu=0.01$. In Fig.\ref{fig_Case_4} D, E, we compare the numerical solutions and predicted solutions of FDM and cd-PINN for $\nu = 0.01, 0.03, 0.05, 0.10$, respectively. The two solutions are visually almost indistinguishable. In Fig.~\ref{fig_Case_4}F, it can be seen that when $\nu=0.01$, the maximum absolute error reaches $0.2025$, and then gradually decreases as $\nu$ increases. In Fig.~\ref{fig_Case_4}G, we further present a comparison between the predicted solution of cd-PINN and the numerical solution obtained by FDM at different values of $\nu$ at time instances $t=0.0, 0.25$ and $0.50$. In addition to the cases within the residual point sample range $\nu \in [0.01, 0.10]$ -- specifically, $\nu=0.01, 0.05$ and $0.10$ -- we also include the results for the inviscid Burgers equation $(\nu=0.00)$, where the numerical solution is obtained using the Mac-Cormack method. Remarkably, the predicted solution of cd-PINN is almost visually indistinguishable from the numerical solution across all tested values of $\nu$, highlighting the strong generalization ability of the proposed model.

\begin{figure}[htbp]
    \centering
    \includegraphics[width=1.0\linewidth]{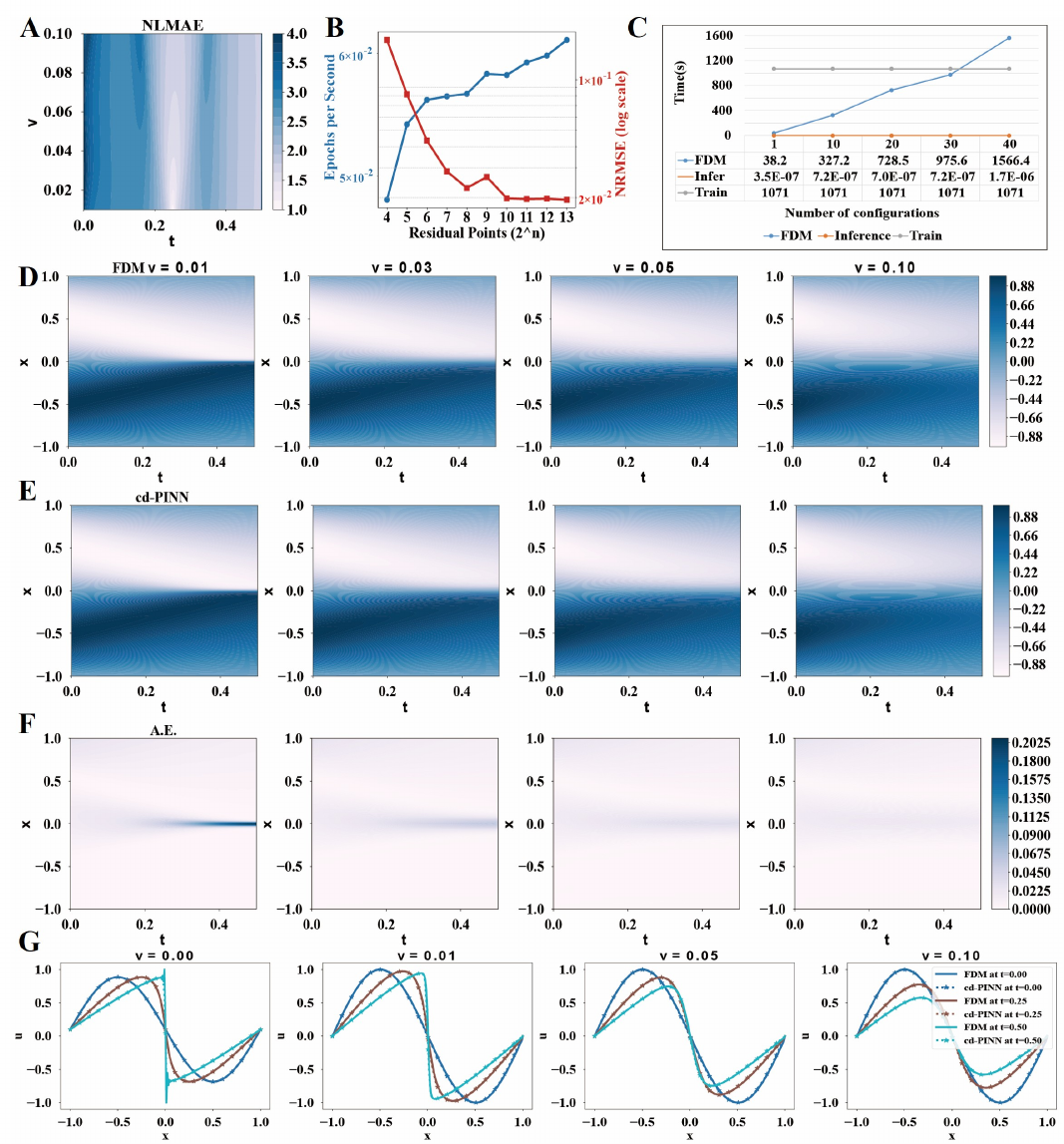}
    \caption{\textbf{Results of the parameterized Burgers equation.} \textbf{(A)} The NLMAE of predictions of the cd-PINN model under each set of $(t, \nu)$. \textbf{(B)} NRMSE and epochs per second $v.s.$ number of residual points. The blue line represents the epochs per second, while the red line represents the NRMSE of the predicted results. \textbf{(C)} Comparison on the computational efficiency of cd-PINN and Newton-Implicit FDM. \textbf{(D)} The numerical solutions of Newton-Implicit FDM, \textbf{(E)} predicted solutions of cd-PINN as well as \textbf{(F)} absolute errors between the two methods at $\nu = 0.01, 0.03, 0.05, 0.07$ and $0.10$, respectively. \textbf{(G)} Comparison of cd-PINN predicted solutions and numerical solution obtained via the FDM at $t=0.0, 0.25, 0.50$ for various values of $\nu$.}
    \label{fig_Case_4}
\end{figure}

In addition to comparing accuracy, we also evaluate the efficiency of the Newton-Implicit FDM and cd-PINN. The total number of residual points is a major factor determining the trade-off between model accuracy and computational cost. On one hand, more residual points increase the number of gradient computations, thereby extending the training time; on the other hand, they provide more comprehensive physical constraint information, potentially improving the accuracy. To investigate this trade-off, we examine how the number of residual points affects the time required for each training epoch and the final test NRMSE, while keeping other factors -- such as network depth/width and number of training iterations -- fixed. Fig.\ref{fig_Case_4} B shows how the total number of residual points impacts both epoch time and final NRMSE. As the number of residual points increases, the time per epoch increases gradually, while the final NRMSE decreases and then stabilizes. When the number of residual points reaches over $2^{13}$, we compare the training time, inference time, and cumulative time required for FDM to solve the equation for multiple $\nu$ values (Fig.\ref{fig_Case_4} C). Especially, when the number of $\nu$ values exceeds $30$, the cumulative computation time of FDM surpasses the total training time of cd-PINN, while the inference time of cd-PINN remains negligible.

\subsection{Applicability to Complex Systems}
Besides its accuracy, efficiency and transferability, cd-PINN has a remarkable capability to apply to complex systems and phenomena, which is demonstrated through the 2d Navier-Stokes equation \cite{stokes1851effect} for viscous, incompressible fluid in the vorticity form:
\begin{equation}
    \frac{\partial \omega}{\partial t} = -\boldsymbol{u}\nabla\omega + \nu\Delta \omega + f, \; \nabla \boldsymbol{u}=0,
\end{equation}
where $\boldsymbol{u}$ is the velocity field and $\omega = \nabla \times \boldsymbol{u}$ is the vorticity. $\boldsymbol{u}, \omega$ lie on a spatial domain with periodic boundary conditions, $\nu$ is the viscosity and $f$ is a constant forcing term. The spatial domain is $\Omega = [-0.5, 0.5)\times[-0.5, 0.5)$, the time interval is $t\in [0.0, 0.5]$, the viscosity is $\nu \in [10^{-3}, 10^{-2}]$, and the forcing term is set as:
\begin{equation}
    f(x_1, x_2) = 0.1\big(sin(2\pi(x_1+x_2))+cos(2\pi(x_1+x_2))\big),\; \forall x \in \Omega.
\end{equation}
Here we use 20 labeled data sampled from $128\times128$ numerical solution corresponding to $\nu=5e^{-3}$, and $2^{13}$ residual data points.

\begin{figure}[H]
    \centering
    \includegraphics[width=1.0\linewidth]{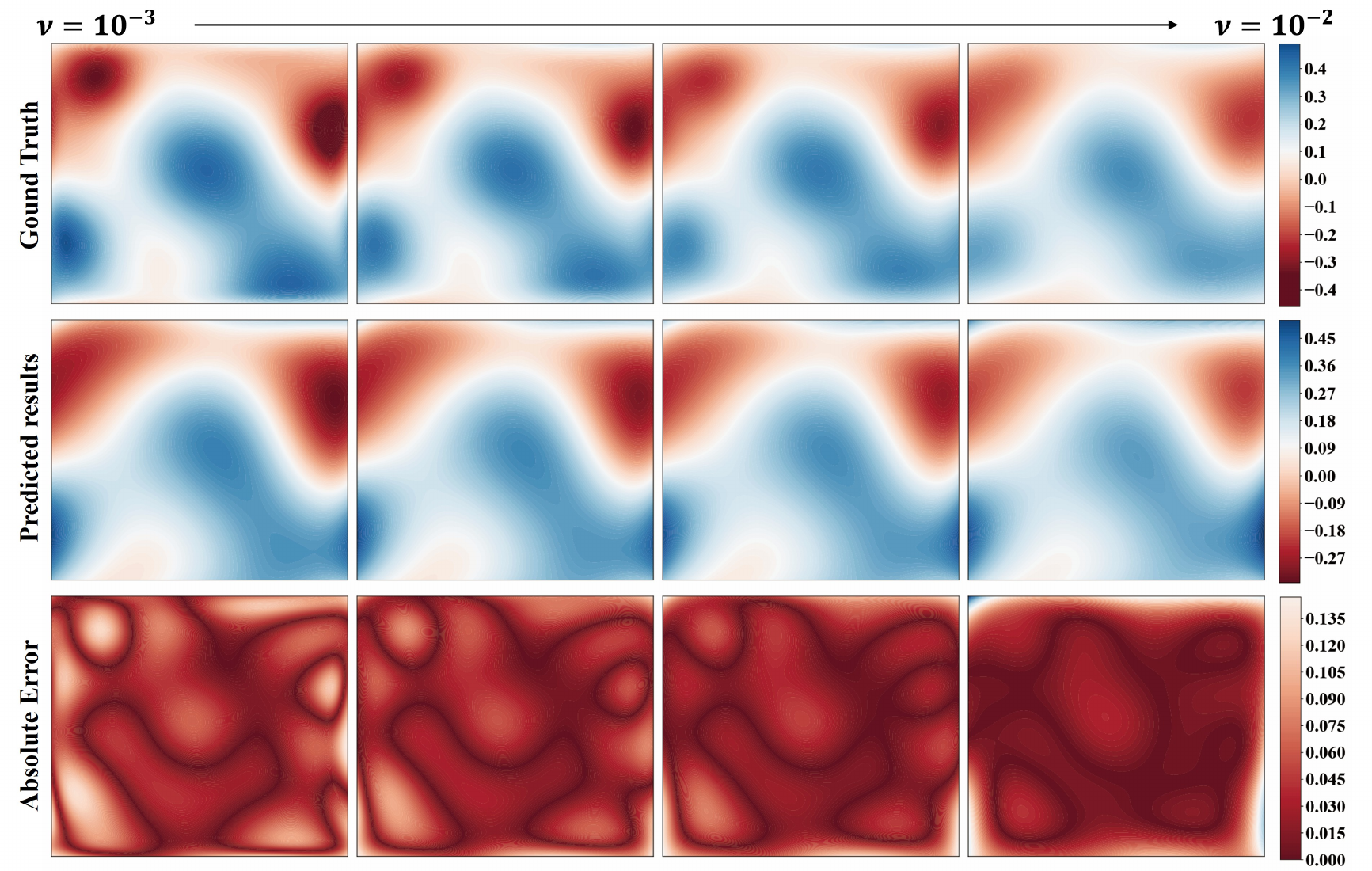}
    \caption{\textbf{Results of the parameterized Navier-Stokes equation.} The first row presents the reference solution computed using the spectral method. The second row displays the corresponding predictions obtained by the cd-PINN model, while the third row shows the point-wise absolute errors. From left to right, the viscosity coefficients are set to $1\times 10^{-3}, 3\times10^{-3}, 5\times10^{-3}$, and $1\times 10^{-2}$, respectively.}
    \label{fig_Case_5}
\end{figure}

This case demonstrates that the proposed cd-PINN is capable to capture the complex vortical behaviors of 2D viscous fluid, even though the model is trained using only a limited number of labeled data samples at a single viscosity coefficient. As shown in Fig. $\ref{fig_Case_5}$, the predictions of cd-PINN agree well with the reference solution in both low and high viscosity regions, and the point absolute error remains small throughout most of the spatial domain.

\subsection{Failure of Regularity}

Through previous examples, we demonstrate that our model performs well if the solution of the PDEs has a continuous dependence on initial values, parameters, and even source terms. A natural question arises: if there is no theoretical guarantee that the solution has a continuous dependence on the initial values or parameters, can cd-PINN still have an excellent generalization capability?

To address this critical issue, let us consider the initial-value problem of the diffusion equation with negative diffusion coefficients:
\begin{equation}
    \begin{split}
        & \frac{\partial u}{\partial t} - D\frac{\partial^2 u}{\partial x^2} = 0, \; (\boldsymbol{x}, t) \in Q_T,\\
        & u(\boldsymbol{x}, t) = u_0(\boldsymbol{x}), \; (\boldsymbol{x}, t) \in \partial_p Q_T,
    \end{split}
\end{equation}
where $Q_T = \Omega \times (0, T)$, and $\partial_p Q_T = \Omega \times \{t=0\}$, $\Omega \subset \mathbb{R}^n$ is a bounded region, $\partial \Omega \in C^{\infty}$, and $T > 0$. Perform the Fourier transform on the initial value $u_0(\boldsymbol{x})$, and we have $\hat{u}_0(\boldsymbol{k}) = \int_{-\infty}^{\infty}u_0(\boldsymbol{x})e^{-ik\boldsymbol{x}}d\boldsymbol{k}$, then the solution of the equation can be formally written as:
\begin{equation}
    u(\boldsymbol{x},t) = \frac{1}{2\pi}\int_{-\infty}^{\infty}\hat{u}_0(\boldsymbol{k})e^{-D\boldsymbol{k}^2t}e^{i\boldsymbol{k}x}d\boldsymbol{k}.
\end{equation}

It is easily seen that when $D>0$, the modal factor $e^{-D\boldsymbol{k}^2 t}$ decays with time $t$, and the high frequency decays faster, so that the solution of this problem remains smooth and stable. In contrast, when $D<0$, the model factor becomes $e^{|D|\boldsymbol{k}^2t}$, the high frequency component grows exponentially, meaning the solution will be exploded within a very small time, and the system becomes ill-posed. To be concrete, consider the one-dimensional case with $D=-1$ and the initial value $u_n(x,0) = e^{-n}\sin(nx)$\cite{hormander2007analysis, bers1964partial}. This problem can be explicitly solved with the solution $u_n(x,t) = e^{-n}\sin{(nx)}e^{n^2t}$. Then $u_n(x,0) \rightarrow 0$ uniformly as $n\rightarrow\infty$, so are all its spatial derivatives. But $\mathop{\sup}\limits_{x\in \mathbb{R}}|u_n(x,t)|\rightarrow\infty$ as $n\rightarrow\infty$ for any $t>0$. As a consequence, the solution does not continuously depend on the initial value.

Although the diffusion equation with negative diffusivity ($D<0$) is theoretically ill-posed and lacks continuous dependence on the initial value, the solution may still exhibit transient stability over a short time interval numerically. Consider the same initial value problem as in Section $\ref{sec_dif_wave}$, but choose $D=-0.002$. Within the prescribed time window, the solution remains well-defined and unique, with all other settings identical to those in Section $\ref{sec_dif_wave}$.

\begin{figure}[H]
    \centering
    \includegraphics[width=1.0\linewidth]{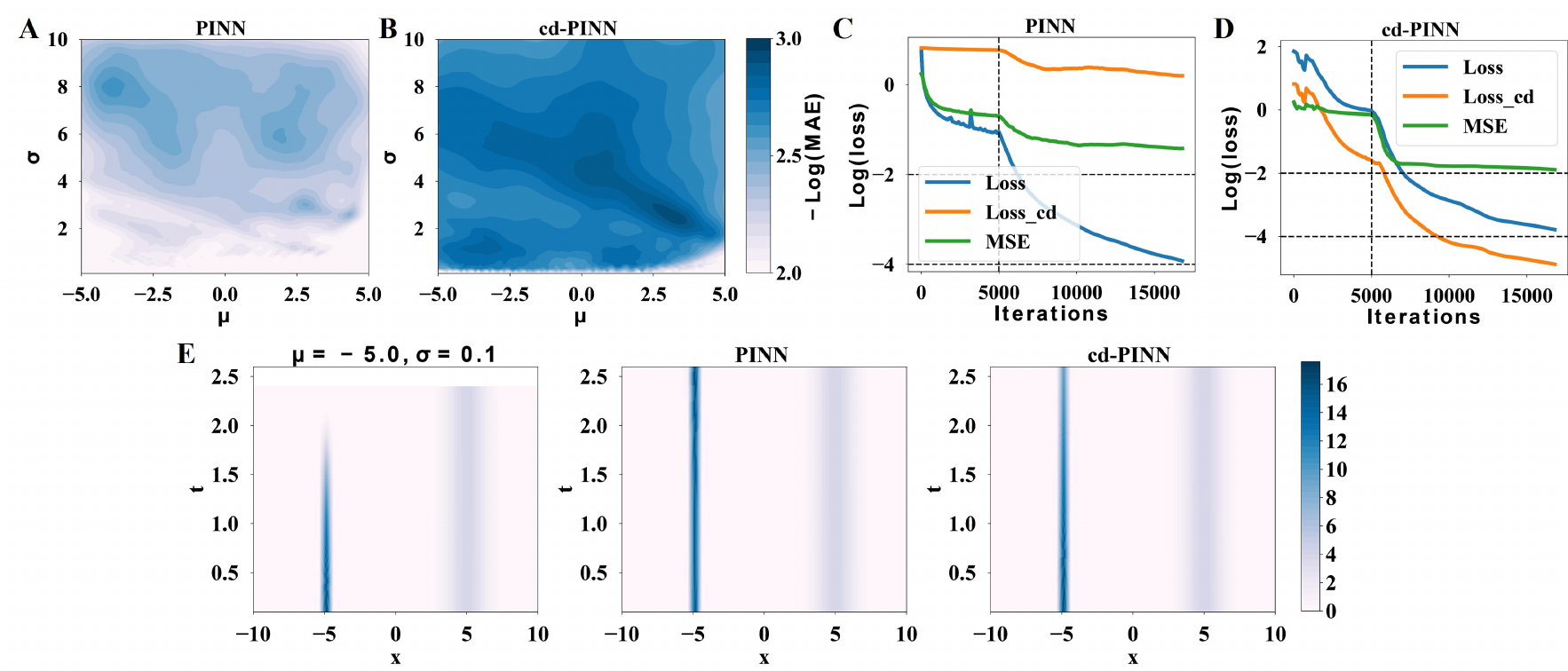}
    \caption{\textbf{Results of the parameterized diffusion equation with negative diffusion coefficients.} The NLMAE of predictions of (\textbf{A}) PINN and (\textbf{B}) cd-PINN under each configuration. The total loss $\mathcal{L}$, continuous dependence loss $\mathcal{L}_{cd}$, and the test MSE of (\textbf{C}) PINN and (\textbf{D}) cd-PINN as the number of training epochs changes. (\textbf{E}) The true solution (left), and predicted solutions by PINN (middle) and cd-PINN (right) at $\mu=-5.0, \sigma=0.1$.}
    \label{fig_Case_6}
\end{figure}

As shown in Fig.\ref{fig_Case_6} A and B, although the solution is only stable within a limited time window, both PINN and cd-PINN demonstrate good performance in prediction. The inclusion of $\mathcal{L}_{cd}$ enables cd-PINN to outperform PINN, achieving a prediction NRMSE of $6.65\times10^{-2}$ compared to $1.23\times 10^{-1}$ for PINN. Subfigures C and D illustrate the decrease of the total loss, $\mathcal{L}_{cd}$, and the test MSE over training epochs for PINN and cd-PINN, respectively. Unlike the case of $D>0$ in Section $\ref{sec_dif_wave}$, the MSE does not converge to the same magnitude as the total loss, indicating a certain degree of overfitting. This may be attributed to insufficient regularization of both the PDE loss and the differentiability loss under the ill-posed condition when $D < 0$. To examine model performance under blow-up conditions, we visualize the true and predicted solutions of PINN and cd-PINN for $\mu=-5.0,$ and $\sigma=0.1$ in Fig.\ref{fig_Case_6} E. The solution diverges to infinity for $t\geq2.5$, and thus the true solution is omitted beyond this point. While neither model accurately captures the blow-up, cd-PINN provides a prediction close to the true solution in the pre-blow-up regime.

%%%%%%%%%%%%%%%%%%%%%%%%%%%%%%
\subsection{Real Application: Protein Aggregation Dynamics in Alzheimer's Disease}
Alzheimer's disease (AD) is a progressive neurodegenerative disorder characterized by the accumulation and spread of misfolded proteins, mainly amyloid $\beta$-protein (A$\beta$) plaques and hyperphosphorylated tau protein tangles, within interconnected brain regions. The spatiotemporal evolution of protein concentration $c\left(\mathbf{x},t\right)$ follows the classical Fisher-Kolmogorov (F-K) equation:
\begin{equation}
\frac{\partial c}{\partial t} = D \nabla^2 c + \alpha \, c(1-c),
\label{eq:fisher_kpp_AD}
\end{equation}
where $c(\mathbf{x}, t) \in [0,1]$ represents the normalized protein concentration, $D$ is the diffusion coefficient, and $\alpha$ is the aggregation rate. We aim to estimate unknown parameters $D,\alpha$ from few-shot observations.

We adopt an offline-online strategy for this inverse problem, with the complete 
algorithm detailed in Algorithm~S1 (see Supplementary Information).  During the offline stage, we first train cd-PINN to approximate the solution $c(t, x; D, \alpha)$ over a wide range of reasonable parameter values. Next, the online stage freezes the network weights and, given observations $\{c_{\text{obs}}(t_i, x_i)\}_{i=1}^{N_{\text{obs}}}$, estimates $D$ and $\alpha$ by directly solving a least-squares minimization problem
\begin{equation}
    (D^*, \alpha^*) = \argmin_{D, \alpha} \frac{1}{N_{\mathrm{obs}}} \sum_{i=1}^{N_{\mathrm{obs}}} \left| \hat{c}(x_i, t_i; D, \alpha) - c_{\mathrm{obs}}^{(i)} \right|^2,
    \label{eq:inverse_1d}
\end{equation}
where $\hat{c}$ denotes predictions of the pre-trained network with frozen weights. The explicit encoding of parameters $(D, \alpha)$ as network inputs enables gradient-based optimization. Compared to the traditional way of solving inverse problems by using PINNs, our current approach avoids repeatedly re-tuning neural networks for finding a sufficiently good approximation to the PDE solution during parameter updates, and thus saves a huge amount of time which is unaffordable for online tasks. 

We first examine our approach with respect to the continuous F-K equation \eqref{eq:fisher_kpp_AD} on synthetic data set. Both PINN and cd-PINN are trained at a single parameter configuration $(D=0.3, \alpha=1.0)$ and evaluated across a broad parameter range. As shown in Figs.~\ref{fig_8}A--D, cd-PINN significantly reduces the prediction error throughout the entire parameter space, and shows remarkable improvement in the regions far from the training point. The test MSE convergence curves (Fig.~\ref{fig_8}E) reveal that cd-PINN not only converges faster but also achieves lower final errors ($3.42 \times 10^{-5}$) than PINN ($1.04 \times 10^{-3}$). For the inverse problem, we evaluate parameter estimation performance across all 99 test configurations (Fig.~\ref{fig_8}H--I). cd-PINN outperforms PINN in 73 cases (73.7\%), achieving an average parameter estimation error of 4.74\% compared to 8.23\% for PINN. To understand this improvement, we visualize the loss landscapes and optimization trajectories for a representative case with true parameters $(D=0.15, \alpha=1.30)$ and initial guess $(D=0.40, \alpha=0.70)$ in Fig.~\ref{fig_8}F--G. The loss landscape of cd-PINN exhibits a smoother, more convex structure that facilitates gradient-based optimization, with the trajectory converging to $(D=0.130, \alpha=1.299)$, close to the ground truth. In contrast, the PINN's landscape contains spurious local minima, and the optimizer converges to $(D=0.100, \alpha=1.255)$, a less accurate estimate.
\begin{figure}[H]
\centering
\includegraphics[width=1.0\linewidth]{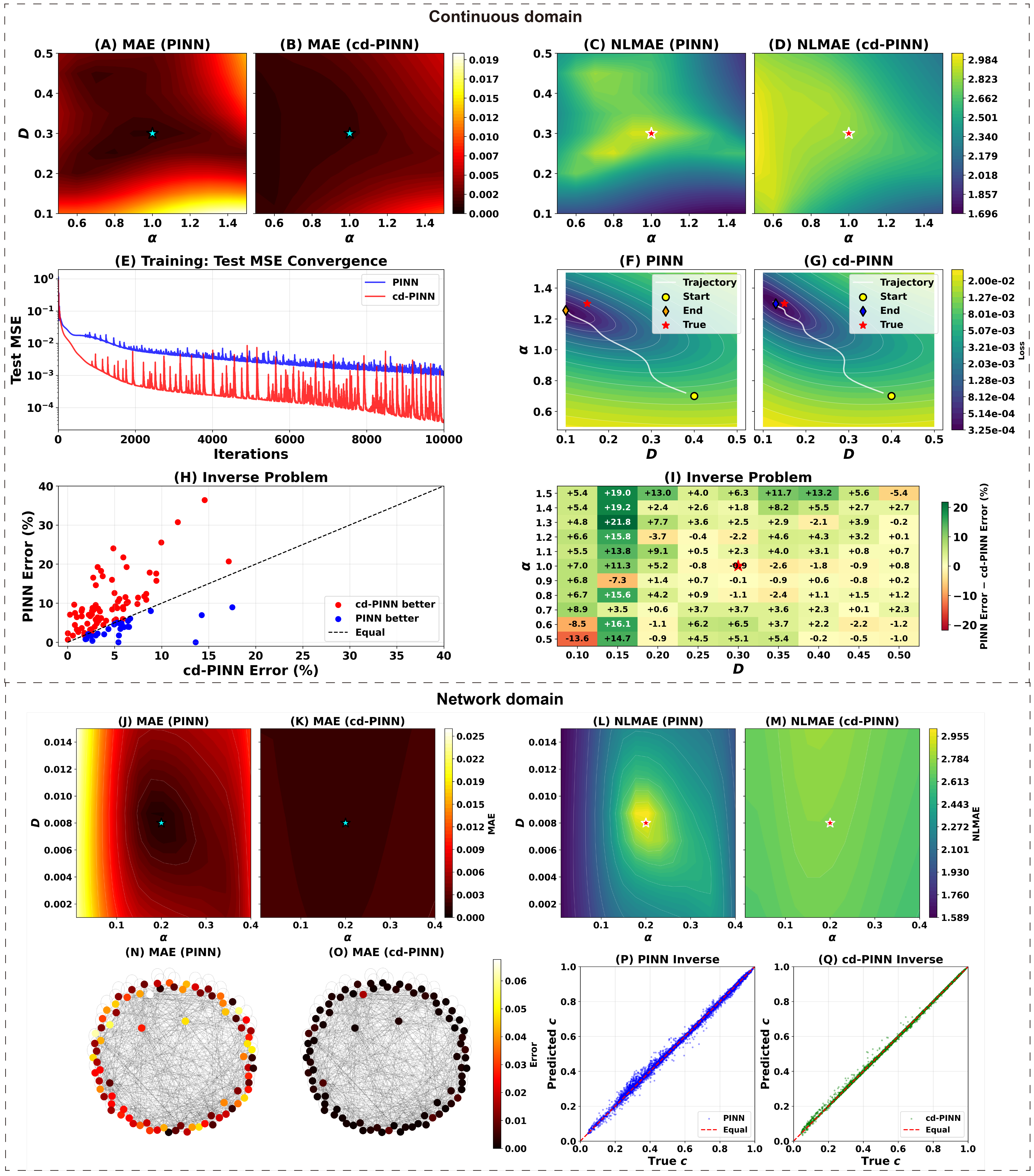}
\caption{\textbf{Evaluation of PINN and cd-PINN for Fisher-Kolmogorov equation in continuous and network settings.} \textit{Continuous domain (synthetic data):} 
    (\textbf{A}--\textbf{B}) MAE and (\textbf{C}--\textbf{D}) NLMAE across diffusion-aggregation parameter space $(D, \alpha)$, with models trained at single configuration. 
    (\textbf{E}) Test MSE convergence during training over 10,000 iterations. 
    (\textbf{F}--\textbf{G})  Loss landscapes for the inverse problem with true parameters $D=0.15$ and $\alpha=1.3$.
   ( \textbf{H}) Comparison of parameter estimation errors across all 99 test configurations. \textbf{(I)} Heatmap of error differences in the $(D, \alpha)$ space.
    \textit{Network domain (brain connectome):} 
   (\textbf{J--K}) MAE and \textbf{(L--M)} NLMAE heatmaps of PINN and cd-PINN.
   (\textbf{N--O}) Spatial distribution of prediction errors on the brain connectivity graph. Node colors represent absolute errors at each brain region.
    (\textbf{P}--\textbf{Q}) Scatter plots of true and predicted solutions for inverse parameter estimation across all test configurations.}
\label{fig_8}
\end{figure}

%%Fisher-KPP network
The human brain exhibits a complex connectome structure, within which misfolded proteins spread over anatomically connected regions. To account for the network topology, we discretize the continuous spatial domain onto the brain connectome, representing it as a graph with $N$ cortical and subcortical regions. The continuous Laplacian operator $\nabla^2$ in the F-K model is then replaced by the graph Laplacian $\mathbf{L} = \mathbf{D}_{\text{deg}} - \mathbf{A}$, where $\mathbf{A}$ is the adjacency matrix encoding structural connectivity of different brain regions, and $\mathbf{D}_{\text{deg}}$ is the degree matrix. The discretized F-K equation on networks reads:
\begin{equation}
\frac{d\mathbf{c}}{dt} = -D \mathbf{L} \mathbf{c} + \alpha \mathbf{c} \odot (1 - \mathbf{c}), \quad t \in [0, T],
\label{eq:graph_fisher_kpp}
\end{equation}
where $\mathbf{c} \in \mathbb{R}^N$ represents protein concentrations in $N$ brain regions, $\odot$ denotes element-wise multiplication, $D > 0$ is a constant diffusion coefficient, and $\alpha > 0$ is the common aggregation rate. 

We evaluate the model performance on both synthetic data generated by the discretized F-K equation on a brain network with $N=83$ regions and real medical data. For the forward problem, cd-PINN achieves a test MSE of $2.93 \times 10^{-5}$ compared to $1.81 \times 10^{-4}$ for PINN. Beyond global accuracy metrics, we also examine the spatial distribution of prediction errors on the brain connectivity graph (Fig.~\ref{fig_8}N--O). The continuous-dependence requirement enables the cd-PINN to achieve lower spatial errors in all 83 brain regions. For the inverse problem, we tested both models across all parameter combinations with observations added by 2\% Gaussian noise. Supplementary Figs.~7 and 8 show detailed temporal predictions for three representative regions of the brain (nodes 0, 40, and 82) in all combinations of parameters. When estimating parameters from simulated observations, the RMSE of cd-PINN across the entire parameter space is lower than that of PINN (Fig.~\ref{fig_8}P--Q). Especially under low and high concentration conditions, its performance improvement is significant. 

To further verify the practical applicability of our method, we then performed patient-specific parameter estimation using real clinical data from the Alzheimer's Disease Neuroimaging Initiative (ADNI)\cite{petersen2010alzheimer}. Specifically, we use the ADNI UC Berkeley tau partial volume–corrected (PVC) dataset, which provides continuous standardized uptake value ratios (SUVRs) quantifying tau PET uptake, together with FreeSurfer-defined regional volumes (in $\text{mm}^3$) for each PET region of interest (ROI). In our experiments, we selected subjects with data available for at least four consecutive years from this dataset.

Alzheimer's disease typically progresses through various clinical stages, ranging from individuals with normal cognition (CN) to those with early mild cognitive impairment (EMCI), mild cognitive impairment (MCI), late mild cognitive impairment (LMCI), and finally patients diagnosed with Alzheimer's disease (AD). Fig.~\ref{fig_9}B presents the longitudinal SUVR prediction results for six representative patients at different disease stages. Both PINN and cd-PINN successfully fit the observed data within the measurement period, with the fitting error of cd-PINN is always lower than that of PINN. The shaded area indicates the uncertainty of the model (meaning $\pm 1$ standard deviation), while cd-PINN consistently produces more stable extrapolations with tighter confidence bounds. Beyond temporal dynamics, we also examined the distribution of tau protein in different brain regions. Fig.~\ref{fig_9} C displays brain surface SUVR maps for both CN and AD patients at different time points. The upper rows show the observed PET data, while the lower rows display the cd-PINN fitted and predicted values. For the CN patient, throughout the entire observation period as well as the predicted future stages (indicated by question marks), the accumulation of tau protein in most areas of the brain is extremely low. In contrast, the AD patient exhibits substantially higher and more widespread tau pathology. Additional brain surface visualizations for intermediate disease stages (EMCI, MCI, and LMCI patients) are provided in the Supplementary Information.

\begin{figure}
\centering
\includegraphics[width=1\linewidth]{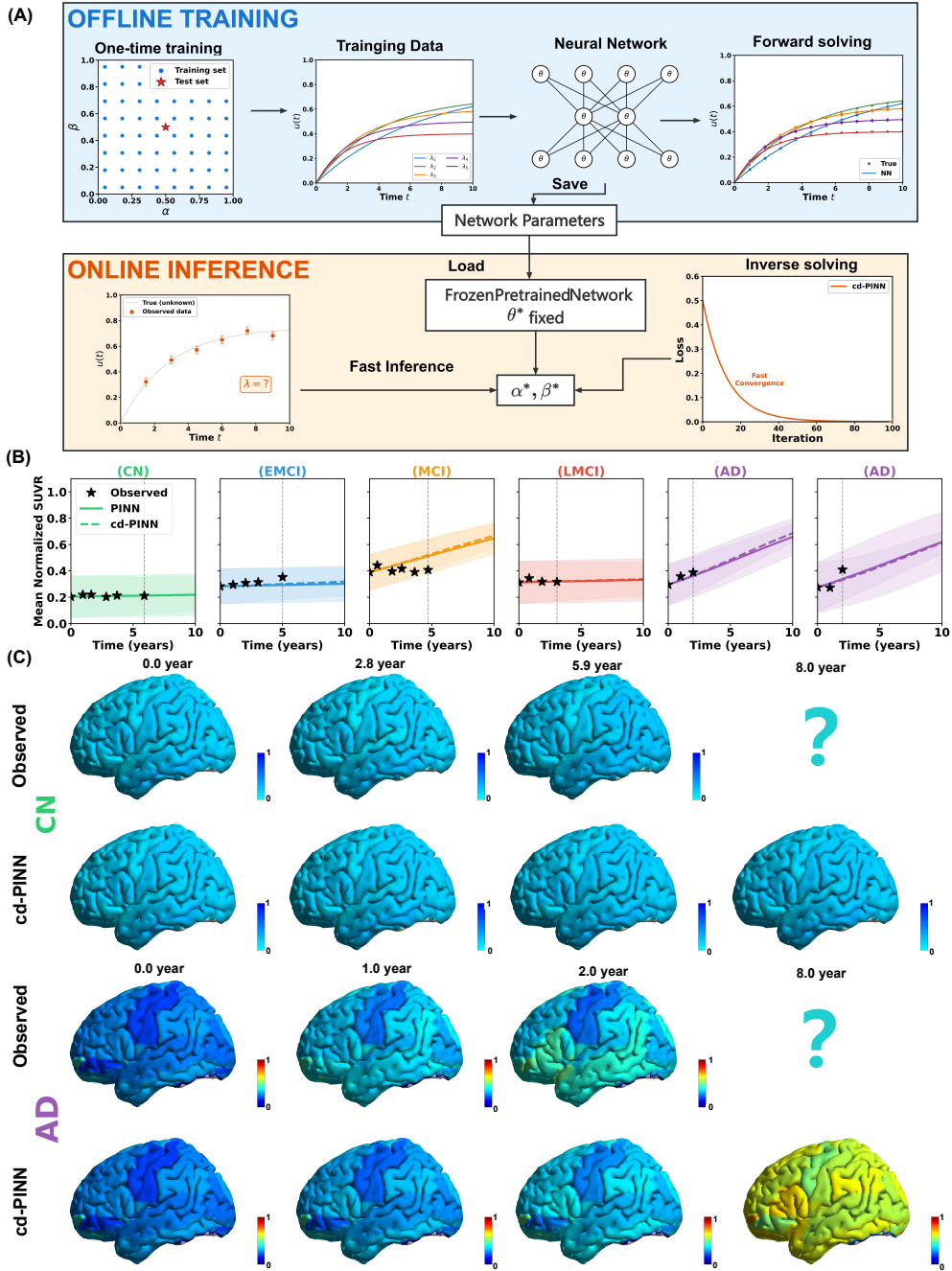}
\caption{\textbf{Patient-specific modeling of tau protein aggregation using real clinical data.} \textbf{(A)}Schematic of the cd-PINN framework. \textit{Top (Offline Training):} Neural network are pre-trained over a wide range of parameters, whose weights $\theta^*$ are saved for online tasks. \textit{Bottom (Online Inference):} Frozen pretrained network enables fast estimation of $\alpha^*, \beta^*$ from new observations without retraining. 
\textbf{(B)} Longitudinal SUVR predictions for patients across different disease stages (CN, EMCI, MCI, LMCI, AD). Stars: observed data; solid/dashed lines: PINN/cd-PINN predictions; shaded regions: $\pm$1 standard deviation; vertical lines: last observation time.
\textbf{(C)} Brain surface SUVR maps comparing CN (top) and AD (bottom) patients. Upper rows: observed PET data; lower rows: cd-PINN fitted/predicted values. ``?'' indicates unavailable future observations.} 
\label{fig_9}
\end{figure}

%%%%%%%%%%%%%%%%%%%%%%%%%%%%%%

\section{Methods}\label{sec_methods}
We consider the dynamical system in the form of
\begin{equation}
    \begin{split}
        \frac{du}{dt} & = \mathcal{P}(u, a), \hspace{1.5cm} in \; \Omega\times [0, T],\\
        u &= g, \; \hspace{2.3cm} in \; \partial \Omega \times [0, T],\\
        u & = u_0, \; \hspace{2.1cm} in\; \bar{\Omega}\times \{0\}.
    \end{split}
    \label{pde}
\end{equation}
where $\Omega \subset \mathbb{R}^d$ is a bounded, open set, the vector $a \in \mathcal{A}\subset\mathbb{R}^{d_a}$ denotes the PDE coefficients (or parameters), and the function $g$ gives a fixed boundary condition, which can also potentially be entered as a parameter. $u(t, \boldsymbol{x})\in \mathcal{U}=\mathcal{U}(\Omega)$ is the unknown for each fixed time point $t\geq0$, where $\mathcal{U}$ is a properly defined function space for the PDE solution. $u_0\in\mathcal{U}$ denotes the initial condition.

\subsection{Regularity Theory for Parabolic PDE}

Here, we take the Cauchy problem of the diffusion equation as an illustration\cite{evans2022partial}.
\begin{equation}
    \begin{split}
        & \frac{\partial u}{\partial t} - \Delta u = f(\boldsymbol{x}, t), \; (\boldsymbol{x}, t) \in Q_T,\\
        & u(\boldsymbol{x}, t) = u_0(\boldsymbol{x}, t), \; (\boldsymbol{x}, t) \in \partial_p Q_T,
    \end{split}
    \label{eu_diffusion}
\end{equation}
For the elliptic-type Poisson equation and the hyperbolic-type wave equation, there exist similar results; see Supplementary Information for details. 

\begin{theorem}{\textbf{(Maximum Principle for Cauchy Problem of Diffusion Equation)}}\label{theorem_max_heat}
    Suppose $u \in C^{2,1}(\Omega \times (0, T]) \cap C(\Omega \times [0, T])$ solves
    \begin{equation}
        \begin{split}
            & \frac{\partial u}{\partial t} - \Delta u = 0, \; (\boldsymbol{x},t) \in Q_T,\\
            & u(\boldsymbol{x}, t) = u_0(\boldsymbol{x}), \; (\boldsymbol{x}, t) \in \partial_p Q_T.
        \end{split}
        \label{heat_nonsource}
    \end{equation}
    and satisfies the growth estimate
    \begin{equation}
        u(\boldsymbol{x}, t) \leq Ae^{a|\boldsymbol{x}|^2}, \; (\forall\boldsymbol{x} \in \Omega, \; 0 \leq t\leq T)
    \end{equation}
    for constants $A$, $a>0$. Then
    \begin{equation}
        \mathop{sup}\limits_{\Omega\times [0,T]}u(\boldsymbol{x}, t) = \mathop{sup}\limits_{\Omega}\; \;  u_0(\boldsymbol{x}),
    \end{equation}
    where $Q_T = \Omega \times (0, T)$, and $\partial_p Q_T = \Omega \times \{t=0\}$, $\Omega \subset \mathbb{R}^n$ is a bounded region, $\partial \Omega \in C^{\infty}$, and $T > 0$.
\end{theorem}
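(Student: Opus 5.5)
We follow the classical argument for the Cauchy problem in Evans, treating the spatial domain as all of $\mathbb{R}^n$. The statement only prescribes data on $\partial_p Q_T=\Omega\times\{t=0\}$, and the growth condition $Ae^{a|\boldsymbol{x}|^2}$ is only meaningful for an unbounded domain. If $\Omega$ were genuinely bounded, the weak maximum principle would also involve the lateral boundary $\partial\Omega\times[0,T]$, which the statement omits.

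The inequality $\sup_{\Omega\times[0,T]}u\ge\sup_\Omega u_0$ is immediate, since $t=0$ belongs to $\Omega\times[0,T]$ and $u=u_0$ there. The whole content is the reverse inequality. We may assume $\sup u_0<\infty$, otherwise there is nothing to prove.

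\textbf{Reduction.} First assume $4aT<1$. Then choose $\varepsilon>0$ and $\gamma>0$ with $4a(T+\varepsilon)<1$ and $\frac{1}{4(T+\varepsilon)}=a+\gamma$. The general case follows by applying this to the successive time slabs $[0,T_1],[T_1,2T_1],\dots$ with $T_1=\frac{1}{8a}$. The bound obtained on one slab becomes the initial bound for the next, and the growth hypothesis holds on each slab with the same constants.

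\textbf{Barrier comparison.} Fix $\boldsymbol{y}\in\mathbb{R}^n$ and $\mu>0$, and set
\begin{equation*}
v(\boldsymbol{x},t)=u(\boldsymbol{x},t)-\frac{\mu}{(T+\varepsilon-t)^{n/2}}\exp\Big(\frac{|\boldsymbol{x}-\boldsymbol{y}|^2}{4(T+\varepsilon-t)}\Big).
\end{equation*}
The subtracted term is a time-reversed heat kernel, and a direct computation shows it solves $w_t-\Delta w=0$. Hence $v_t-\Delta v=0$.

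Apply the weak maximum principle for bounded cylinders to $v$ on $U_T=B(\boldsymbol{y},r)\times(0,T]$. On the bottom, $v(\boldsymbol{x},0)\le u_0(\boldsymbol{x})\le\sup u_0$. On the lateral boundary $|\boldsymbol{x}-\boldsymbol{y}|=r$, the growth estimate together with $|\boldsymbol{x}|\le|\boldsymbol{y}|+r$ gives
\begin{equation*}
v\le Ae^{a(|\boldsymbol{y}|+r)^2}-\mu(4(a+\gamma))^{n/2}e^{(a+\gamma)r^2}.
\end{equation*}
Here we used that $T+\varepsilon-t\le T+\varepsilon$ makes the barrier at least its value at $t=0$. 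Because $\gamma>0$, the negative term dominates, so for $r$ large this is $\le\sup u_0$. The weak maximum principle then yields $v\le\sup u_0$ on $U_T$, in particular $v(\boldsymbol{y},t)\le\sup u_0$ for $0\le t\le T$. Since $v(\boldsymbol{y},t)=u(\boldsymbol{y},t)-\mu(T+\varepsilon-t)^{-n/2}$, letting $\mu\to0$ gives $u(\boldsymbol{y},t)\le\sup u_0$. As $\boldsymbol{y}$ was arbitrary, this is the reverse inequality.

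\textbf{Main obstacle.} The delicate step is the lateral-boundary estimate. One needs the exponent $a+\gamma$ of the barrier to strictly exceed the growth rate $a$, and this is exactly what forces the smallness condition $4aT<1$ and hence the slab-iteration argument. The verification that the barrier solves the heat equation and the weak maximum principle on bounded cylinders are standard and can be quoted.
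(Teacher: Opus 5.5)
Your proposal is correct and is the classical Evans argument: a time-reversed heat-kernel barrier on $B(\boldsymbol{y},r)\times(0,T]$ under $4aT<1$, then iteration over slabs of length $1/(8a)$. The paper states this theorem without proof and simply cites Evans, and your reinterpretation with $\Omega=\mathbb{R}^n$ is necessary, not just convenient: for a genuinely bounded $\Omega$ with data only on $\Omega\times\{t=0\}$ the stated equality fails (e.g.\ $u=x^2+2t$ on $\Omega=(0,1)$ gives $\sup u=1+2T>1=\sup u_0$).
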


According to Theorem \ref{theorem_max_heat} and some simple proof, we have the following theorem.

\begin{theorem}{\textbf{(Existence and Uniqueness of Diffusion Equation)}}\label{theorem_eu_heat}
    Let $u_0 \in C(\Omega), \; f\in C(\Omega\times [0, T])$. Then there exists at most one solution $u \in C^{2,1}(\Omega \times (0, T])\cap C(\Omega \times [0, T])$ to the Cauchy problem $(\ref{eu_diffusion})$ satisfying the growth estimate $|u(\boldsymbol{x}, t)|\leq Ae^{a|\boldsymbol{x}|^2}, \; (\forall\boldsymbol{x} \in \Omega, 0\leq t\leq T)$ for constants $A, a > 0$.
\end{theorem}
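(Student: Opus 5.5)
The plan is the standard uniqueness argument: linearity reduces the problem to the homogeneous case, and Theorem~\ref{theorem_max_heat} then controls the difference of two solutions. Suppose $u_1,u_2 \in C^{2,1}(\Omega\times(0,T])\cap C(\Omega\times[0,T])$ both solve the Cauchy problem (\ref{eu_diffusion}) with the same data $u_0$ and $f$. Suppose also that both satisfy the growth bound, say $|u_i(\boldsymbol{x},t)|\le A_i e^{a_i|\boldsymbol{x}|^2}$. Set $w=u_1-u_2$. Since the operator $\partial_t-\Delta$ is linear, $w$ lies in the same regularity class and solves the homogeneous problem (\ref{heat_nonsource}) with zero initial data: $w_t-\Delta w=0$ in $Q_T$ and $w=0$ on $\partial_p Q_T$.

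Next I check the growth hypothesis of Theorem~\ref{theorem_max_heat} for both $w$ and $-w$. By the triangle inequality, $|w|\le |u_1|+|u_2|\le (A_1+A_2)e^{\max(a_1,a_2)|\boldsymbol{x}|^2}$. Hence $w\le Ae^{a|\boldsymbol{x}|^2}$ and $-w\le Ae^{a|\boldsymbol{x}|^2}$ with $A=A_1+A_2$ and $a=\max(a_1,a_2)$. The function $-w$ also solves (\ref{heat_nonsource}) with zero initial data, again by linearity.

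Applying Theorem~\ref{theorem_max_heat} to $w$ gives $\sup_{\Omega\times[0,T]} w=\sup_\Omega 0=0$. Applying it to $-w$ gives $\sup_{\Omega\times[0,T]}(-w)=0$. Together these give $w\equiv 0$ on $\Omega\times[0,T]$, that is, $u_1=u_2$, which is the claimed uniqueness. The continuity assumptions on $u_0$ and $f$ are used only to make sense of the problem; they play no role in the argument itself.

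There is no real obstacle once Theorem~\ref{theorem_max_heat} is available. The only point to check is that the growth condition required there is one-sided, while Theorem~\ref{theorem_eu_heat} assumes a two-sided bound $|u|\le Ae^{a|\boldsymbol{x}|^2}$. That two-sided bound is exactly what lets the maximum principle be applied to both $w$ and $-w$. All the substantive work, namely the comparison with the barrier $\frac{\mu}{(T-t)^{n/2}}e^{|\boldsymbol{x}|^2/(4(T-t))}$ on short time slabs of length less than $1/(4a)$, is already contained in Theorem~\ref{theorem_max_heat}, so I assume it as given.
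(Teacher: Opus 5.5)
Your proof is correct and follows the paper's intended route: the paper only says uniqueness follows from Theorem~\ref{theorem_max_heat} ``and some simple proof,'' meaning subtract two solutions and apply the maximum principle, and it uses the same reduction $w=u_1-u_2$ in its continuous-dependence paragraph immediately afterward. Your explicit use of the two-sided growth bound to apply Theorem~\ref{theorem_max_heat} to both $w$ and $-w$ is the point the paper glosses over. One caveat comes from the paper rather than from your argument: Theorem~\ref{theorem_max_heat}, and hence uniqueness, genuinely holds only for the whole-space Cauchy problem $\Omega=\mathbb{R}^n$, which is the setting your barrier remark presupposes.
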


In Theorem $\ref{theorem_eu_heat}$, we show the existence and uniqueness of the solution to the diffusion equation, which is bounded in $\Omega \in \mathbb{R}^n$ and the boundary $\partial \Omega \in C^{\infty}$, making it not applicable to general regions. However, we can relax the restrictions on the region as well as the requirements of $f$ and $u_0$. More details can be found in Supplementary Information. 

Suppose $u_1$ and $u_2$ are the solutions to equation $(\ref{eu_diffusion})$ with the same source term $f$ but different initial conditions $u_{0_1}$ and $u_{0_2}$, satisfying $|u_{0_1} - u_{0_2}| < \epsilon$ everywhere on $\partial_p Q_T$, where $\epsilon\ll1$ is a given constant. Let $w = u_1 - u_2$, it is easy to prove that $w$ is the solution of diffusion equation in $(\ref{heat_nonsource})$ with the initial condition $u_{0_1}-u_{0_2}$. According to Theorem $\ref{theorem_max_heat}$,
\begin{equation}
    \mathop{sup}\limits_{\Omega\times [0, T]} \; \; \omega(\boldsymbol{x}, t) = \mathop{sup}\limits_{\Omega}\; \; (u_{0_1}(\boldsymbol{x})- u_{0_2}(\boldsymbol{x})) \leq \epsilon,
\end{equation}
and $\omega$ is positive on $\Omega \times [0, T]$, we have $|\omega| = |u_1 - u_2| \leq \epsilon$, which means the solution of diffusion equation in $(\ref{eu_diffusion})$ is continuously depending on the given initial condition. 

On the other hand, towards the diffusion coefficient $D$ in the diffusion equation, we consider the following simple situation. Let the diffusion constants $D_1$ and $D_2$ be greater than $0$, satisfying $|D_1 - D_2| < \epsilon$, where $\epsilon\ll1$ is a given positive constant. Let $u_1, u_2 \in C^{\infty}(\Omega)$ satisfy
\begin{equation}
    \begin{split}
        & \frac{\partial u_1}{\partial t} - D_1 \Delta u_1 = f(\boldsymbol{x}, t), \; (\boldsymbol{x}, t) \in \Omega\times (0, T),\\
        & \frac{\partial u_2}{\partial t} - D_2 \Delta u_2 = f(\boldsymbol{x}, t), \; (\boldsymbol{x}, t) \in \Omega\times (0, T),\\
        & u_1 = u_2 = u_0(\boldsymbol{x},t), \;(\boldsymbol{x},t) \in \Omega \times \{t=0\},\\
        & u_1 = u_2 = \varphi(\boldsymbol{x}, t), \; (\boldsymbol{x}, t) \in \partial \Omega \times (0, T).
    \end{split}
\end{equation}
Let $D_1 = D_2 + r \; (r<\epsilon)$, and $\omega = u_1 - u_2$, then $\omega$ satisfies
\begin{equation}
    \begin{split}
        & \frac{\partial \omega}{\partial t} - D_2 \Delta \omega = r\Delta u_1, \; (\boldsymbol{x}, t) \in \Omega \times (0, T),\\
        & \omega = 0, \; (\boldsymbol{x}, t) \in \Omega \times \{t=0\},\\
        & \omega = 0, \; (\boldsymbol{x}, t) \in \partial \Omega \times (0, T).
    \end{split}
\end{equation}
That is, $\omega$ satisfies a non-homogeneous diffusion equation with an initial boundary value $0$ and a source term $r\Delta u_1$.

\begin{theorem}{\textbf{(Energy Estimation for Second-Order Parabolic Equations)}}\label{energy_parabolic}
    For the general second-order parabolic equation with initial and boundary conditions as
    \begin{equation} 
    \begin{split} 
    &\frac{\partial u}{\partial t} - \Big(\mathop{\sum}\limits_{i,j=1}\frac{\partial}{\partial \boldsymbol{x}_i}\Big(a_{ij}( \boldsymbol{x}, t) \frac{\partial u}{\partial \boldsymbol{x}_j}\Big) + \sum_{i=1}^{n}b_{i}( \boldsymbol{x},t)\frac{\partial u}{\partial \boldsymbol{x}_i} + c(\boldsymbol{x}, t)u\Big) = f( \boldsymbol{x}, t),\\
            & u (\boldsymbol{x}, t) = g(\boldsymbol{x}, t), \; (\boldsymbol{x}, t) \in \Omega \times \{t=0\},\\
            &u(\boldsymbol{x}, t) = 0, \; (\boldsymbol{x}, t) \in \partial \Omega \times (0, T),
        \end{split}
    \label{eq_parabolic}
    \end{equation}
    where $(a_{ij}(\boldsymbol{x},t))_{i,j=1,\cdots, n}$ is a positive-definite matrix, that is for a constant $\alpha > 0$ it satisfies
    \begin{equation}
        \mathop{\sum}\limits_{i,j=1}^{n} a_{ij}\xi_i \xi_j \geq \alpha |\xi|^2, \; \forall\xi \in \mathbb{R}^n.
    \end{equation}
    The coefficients on the left side of equation $(\ref{eq_parabolic})$ are assumed to be $C^{\infty}$ functions in the region under consideration. 
    
    Define $Q_T = \Omega \times (0, T)$. Assume $\boldsymbol{u} \in C^{\infty}(\bar{Q}_T)$ is the solution of equation $(\ref{eq_parabolic})$, and $f \in L^2(Q_T)$, then $\forall t \in [0,T]$, we have the estimates
    \begin{equation}
        E(t) \leq C\big[E(0) + \int_{0}^T\int_{\Omega}f^2 d\boldsymbol{x}dt\big],
    \end{equation}
where $E(t)=\int_{\Omega} u^2 (\boldsymbol{x}, t) d\boldsymbol{x}$ denotes the energy.
\end{theorem}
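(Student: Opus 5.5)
The plan is the standard energy method: multiply the equation in $(\ref{eq_parabolic})$ by $u$, integrate over $\Omega$, and close the estimate with Gronwall's inequality. Since $u\in C^{\infty}(\bar Q_T)$, we may differentiate $E(t)=\int_\Omega u^2\,d\boldsymbol{x}$ under the integral sign, which gives $E'(t)=2\int_\Omega u\,u_t\,d\boldsymbol{x}$. Substituting $u_t$ from the equation, we handle the principal part by integrating by parts in $\boldsymbol{x}_i$. The boundary term $\int_{\partial\Omega} u\,a_{ij}\partial_j u\,\nu_i\,dS$ vanishes because $u=0$ on $\partial\Omega\times(0,T)$. The principal part then becomes $-2\int_\Omega a_{ij}\partial_i u\,\partial_j u\,d\boldsymbol{x}$, and uniform ellipticity bounds it above by $-2\alpha\int_\Omega|\nabla u|^2\,d\boldsymbol{x}$.

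The lower-order terms are estimated next. The coefficients are smooth on the compact set $\bar Q_T$, so there is a constant $M$ with $|b_i|,|c|\le M$. For the first-order term we use Cauchy--Schwarz with $\varepsilon$:
\[
2\Big|\int_\Omega u\,b_i\partial_i u\,d\boldsymbol{x}\Big|\le \alpha\int_\Omega|\nabla u|^2\,d\boldsymbol{x}+\frac{nM^2}{\alpha}\int_\Omega u^2\,d\boldsymbol{x}.
\]
This absorbs half of the good gradient term. The zeroth-order term satisfies $2\int_\Omega c\,u^2\,d\boldsymbol{x}\le 2M E(t)$. The source term satisfies $2\int_\Omega fu\,d\boldsymbol{x}\le E(t)+\int_\Omega f^2\,d\boldsymbol{x}$. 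Dropping the remaining nonpositive term $-\alpha\int_\Omega|\nabla u|^2\,d\boldsymbol{x}$, we obtain the differential inequality
\[
E'(t)\le K\,E(t)+\int_\Omega f^2(\boldsymbol{x},t)\,d\boldsymbol{x},\qquad K=1+2M+\frac{nM^2}{\alpha}.
\]

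Gronwall's lemma finishes the argument. Multiply by $e^{-Kt}$ and integrate from $0$ to $t$:
\[
E(t)\le e^{Kt}\Big[E(0)+\int_0^t e^{-Ks}\!\int_\Omega f^2\,d\boldsymbol{x}\,ds\Big]\le e^{KT}\Big[E(0)+\int_0^T\!\!\int_\Omega f^2\,d\boldsymbol{x}\,dt\Big].
\]
This is the claimed estimate with $C=e^{KT}$, which depends only on $\alpha$, $n$, $T$ and the bounds on the coefficients. Here $E(0)=\int_\Omega g^2\,d\boldsymbol{x}$.

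I expect the main obstacle to be the first-order term $b_i\partial_i u$. It is the only term that cannot be bounded by $E(t)$ directly, and it must be absorbed into the ellipticity term with a carefully chosen $\varepsilon$. A secondary technical point is that the statement only assumes $f\in L^2(Q_T)$, while $u$ is smooth. Then $t\mapsto\int_\Omega f^2\,d\boldsymbol{x}$ is merely integrable, so I will use the integral form of Gronwall: integrate the differential inequality in time first, and avoid any pointwise use of $f$. This requires no regularity of $f$ beyond square integrability.
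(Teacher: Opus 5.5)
Your proposal is correct and follows the standard energy-method argument that the paper relies on for this classical estimate: multiply by $u$, integrate by parts using the homogeneous Dirichlet condition, use ellipticity together with Cauchy's inequality with $\varepsilon$ to absorb the first-order term, and close with Gronwall to obtain $C=e^{KT}$. One small remark: $u$ and the coefficients are smooth on $\bar Q_T$, so $f$ is fixed by the equation and is itself continuous there; your extra care about $f$ lying only in $L^2(Q_T)$ is harmless but not needed.
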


It is easy to verify that $\omega$ satisfies the equation $(\ref{eq_parabolic})$ and the relevant conditions in Theorem $\ref{energy_parabolic}$, so we get
\begin{equation}
    \int_{\Omega} \omega^2(\boldsymbol{x}, t)d\boldsymbol{x} \leq Cr^2 \int_{0}^T\int_{\Omega} \Delta u_1^2d\boldsymbol{x}dt < \tilde{\epsilon}.
\end{equation}
It follows that in this case, the solution to the diffusion equation is continuously dependent on the diffusion constant. Regarding the source term, we can also draw similar conclusions based on the energy estimation inequality above.

Towards the parameterized PDE solution problem in \eqref{pde}, a central premise for an algorithm's success, no matter it is an operator learning or the improved PINN algorithm based on meta-learning, is that the PDE solution is existed and unique under the new parameters and initial/boundary values. Therefore, in this paper, we assume that for any set of configurations in the PDE to be solved, the classical solution exists uniquely without proof.

\subsection{PINN with Continuous Dependence (cd-PINN)}

We assume that the classical solution $u$ to a PDE system uniquely exists and is bounded for all time and for every $u_0 \in \mathcal{U}$. Let
\begin{equation}
    G: \mathcal{A} \times \mathcal{U} \times [0,T] \times \Omega \mapsto \mathbb{R}^{d_u}, \quad G(a, u_0, t, \boldsymbol{x}) = u(t, \boldsymbol{x}), \quad  \forall(t, \boldsymbol{x}) \in [0, T] \times \Omega.
\end{equation}
be a nonlinear map. We study maps $G$ which arise as the solution operators of the parametric PDEs. Suppose we have a few observations $\{a_j, u_{0_j}, u_j\}_{j=1}^{N}$, where $u_j=u(a_j, u_{0_j}, t, \boldsymbol{x}) = G(a_j, u_{0_j}, t, \boldsymbol{x})$, seen Fig. \ref{fig_diagram} D. In practice, $N$ is often a very small number. In some of our examples, we choose $N=1$.

Now, the question is how to identify and learn the solution to the PDE among multiple configurations. For operator learning methods such as FNO and DeepONet, they build a specific model architecture and systematically learn the manifold architecture of solution mapping. For example, DeepONet aims to learn the mapping function from parameters to solutions: $G: u_0 \mapsto u$, see Fig.~\ref{fig_diagram}A. For meta-learning based PINN Meta-Auto-Decoder(MAD) \cite{huang2022meta}, it tries to learn a Lipschitz continuous mapping in a low-dimensional space: $\bar{G}: \boldsymbol{c}\mapsto u$, where $\boldsymbol{c}\in C\subset\mathbb{R}^{l}(l \ll d_a)$, such that $G(\mathcal{A}) \subset \bar{G}(C)$, see Fig.\ref{fig_diagram} B. In other words, for any $u_0 \in \mathcal{A}$, it hopes that there exists $\boldsymbol{c} \in C$ satisfying $\bar{G}(\boldsymbol{c}) = G(u_0)$.

Our goal is to learn an operator $G: (\boldsymbol{a},\boldsymbol{u}_0,t,\boldsymbol{x}) \mapsto u(t,\boldsymbol{x})$, where $u(t, \boldsymbol{x})$ satisfies the parameterized differential equation ($\ref{pde}$). To this end, we set part of the model's input to be $(t, \boldsymbol{x})$ and the model's output as $u(t, \boldsymbol{x})$. When we need to obtain the differential equation solution $u(t, \boldsymbol{x})$ at a new spacetime point in the solution domain that is different from the training point, we can directly call the model for output. Such a kind of design has another advantage. When solving a large class of differential equations, such as heat equations, we can quickly make full use of the information in the equation, as shown in Fig.~\ref{fig_diagram}E. 

On the other hand, we hope that the operator we learned is still valid when the equation parameters or initial/boundary conditions change, such as the diffusion coefficient or initial conditions in the diffusion equation. Towards this problem, we assume that for these given configurations, there exists a unique encoding $\boldsymbol{c}$ such that the solution directly depends on $(\boldsymbol{x}, t, \boldsymbol{c})$. As an illustration, the following theorem establishes the mathematical foundation for using neural networks to approximate this continuous mapping from the encoding $\boldsymbol{c}$ to the solution of the Cauchy problem of the diffusion equation.
\begin{theorem}\label{universal_approximation}
    Suppose $u_0 \in C(\Omega), f\in C(\Omega\times [0, T]),$ there exists a unique encoding $\boldsymbol{c} \in \mathbb{R}^m$ such that the solution of the Cauchy problem $(\ref{eu_diffusion})$ is continuously depending on $\boldsymbol{c}$, that is, $u(\boldsymbol{x}, t, \boldsymbol{c})$ is a continuous function with respect to $\boldsymbol{c}$. Then $\forall \; \epsilon >0$, there exists $N > 0$, $\{\alpha_j\}_{j=1}^N \in \mathbb{R}$, $\{\omega_j\}_{j=1}^N \subset \mathbb{R}^{n+1+m}$, and $\{b_j\}_{j=1}^N \subset \mathbb{R}$, such that the following defined function 
    \begin{equation}
        G(\boldsymbol{x}, t, \boldsymbol{c}) = \mathop{\sum}\limits_{j=1}^{N}\alpha_j \sigma(\omega_j^T \boldsymbol{z} + b_j), \; where \; \boldsymbol{z} = [\boldsymbol{x}^T, t, \boldsymbol{c}^T]^T \in \mathbb{R}^{n+1+m},
    \end{equation}
    satisfies
    \begin{equation}
        \mathop{sup}\limits_{(\boldsymbol{x}, t, \boldsymbol{c}) \in \mathbb{R}^{n+1+m}} |u(\boldsymbol{x}, t, \boldsymbol{c}) - G(\boldsymbol{x}, t, \boldsymbol{c})| < \epsilon.
    \end{equation}
    where $\sigma$ is a general non-polynomial function.
\end{theorem}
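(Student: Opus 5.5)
The plan is to reduce the statement to the classical universal approximation theorem for single-hidden-layer networks with a non-polynomial activation (Leshno--Lin--Pinkus--Schocken, extending Cybenko and Hornik). That theorem says $\mathrm{span}\{\sigma(\omega^T\boldsymbol{z}+b)\}$ is dense in $C(K)$ in the sup norm for every compact $K\subset\mathbb{R}^{n+1+m}$. Taken literally, the statement takes the supremum over all of $\mathbb{R}^{n+1+m}$, and this cannot hold in general: a finite sum $\sum_j\alpha_j\sigma(\omega_j^T\boldsymbol{z}+b_j)$ has a prescribed behaviour at infinity that $u$ need not share. I will therefore read the supremum as taken over the natural compact domain $K=\bar\Omega\times[0,T]\times\mathcal{C}$, where $\mathcal{C}\subset\mathbb{R}^m$ is a compact set of admissible encodings. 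This is the only regime in which the network is trained or evaluated. I would also require $\sigma$ to be locally bounded and continuous almost everywhere, as the cited theorem does. With this reading, everything comes down to one fact: the map $(\boldsymbol{x},t,\boldsymbol{c})\mapsto u(\boldsymbol{x},t,\boldsymbol{c})$ is \emph{jointly} continuous on $K$.

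First, for each fixed $\boldsymbol{c}\in\mathcal{C}$, Theorem~\ref{theorem_eu_heat} gives at most one solution in the growth class, and we have assumed a classical solution exists. So $u(\cdot,\cdot,\boldsymbol{c})$ is well defined and lies in $C(\bar\Omega\times[0,T])$. The hypothesis only gives continuity in $\boldsymbol{c}$ for each fixed $(\boldsymbol{x},t)$, and separate continuity does not imply joint continuity. The key step is therefore to upgrade continuity in $\boldsymbol{c}$ to continuity that is uniform in $(\boldsymbol{x},t)$.

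For this I would use the stability argument the paper derives from Theorem~\ref{theorem_max_heat}. The difference $w=u(\cdot,\cdot,\boldsymbol{c})-u(\cdot,\cdot,\boldsymbol{c}')$ solves the homogeneous problem (\ref{heat_nonsource}). Applying the maximum principle to both $w$ and $-w$ yields
\begin{equation}
\sup_{\Omega\times[0,T]}|u(\boldsymbol{x},t,\boldsymbol{c})-u(\boldsymbol{x},t,\boldsymbol{c}')|\le \sup_{\Omega}|u_0(\cdot;\boldsymbol{c})-u_0(\cdot;\boldsymbol{c}')| .
\end{equation}
If the encoding enters through a parameter such as the diffusion coefficient instead, the energy estimate of Theorem~\ref{energy_parabolic} plays the same role. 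In that case I would add a standard interior regularity bootstrap to pass from the $L^2$ bound to a sup-norm bound. The encoding is continuous in $u_0$, i.e.\ $\boldsymbol{c}\mapsto u_0(\cdot;\boldsymbol{c})$ is continuous into $C(\bar\Omega)$, and $\mathcal{C}$ is compact. Hence the right-hand side tends to $0$ as $\boldsymbol{c}'\to\boldsymbol{c}$, uniformly in $(\boldsymbol{x},t)$.

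Joint continuity then follows from the triangle inequality:
\begin{equation}
|u(\boldsymbol{z})-u(\boldsymbol{z}')|\le |u(\boldsymbol{x},t,\boldsymbol{c})-u(\boldsymbol{x}',t',\boldsymbol{c})|+\sup_{\bar\Omega\times[0,T]}|u(\cdot,\cdot,\boldsymbol{c})-u(\cdot,\cdot,\boldsymbol{c}')|,
\end{equation}
where $\boldsymbol{z}=(\boldsymbol{x},t,\boldsymbol{c})$ and $\boldsymbol{z}'=(\boldsymbol{x}',t',\boldsymbol{c}')$. The first term is small by continuity of $u(\cdot,\cdot,\boldsymbol{c})$ and the second by the uniform bound above. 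So $u\in C(K)$, and the density theorem supplies $N$, $\alpha_j$, $\omega_j$, $b_j$ with $\sup_K|u-G|<\epsilon$. A vector-valued $u$ would be handled componentwise.

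I expect the main obstacle to be this uniform-in-$(\boldsymbol{x},t)$ stability step, together with the correct choice of domain. The bound must be independent of the point in space-time, and $\mathcal{C}$ must be compact. The global supremum in the statement must be relaxed to a compact set, or else justified by extending $u$ continuously to all of $\mathbb{R}^{n+1+m}$ with compact support and proving a separate approximation result at infinity. I would take the first route.
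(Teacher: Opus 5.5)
Your argument is correct for the corrected statement, and its backbone is the paper's: view $u$ as a function of $\boldsymbol{z}=(\boldsymbol{x},t,\boldsymbol{c})$ and invoke density of one-hidden-layer networks. The difference lies in what you check before invoking that theorem.

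The paper's proof takes $K=\Omega\times\mathbb{R}\times\mathbb{R}^m$ and asserts without argument that $u(\boldsymbol{z})$ is continuous on $K$. It then quotes Cybenko/Hornik to get a supremum bound over all of $\mathbb{R}^{n+1+m}$. That skips exactly the two issues you isolate.

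First, those density theorems only give uniform approximation on compact sets, and the global bound is false in general. Take $u_0\equiv c$ and $f=0$, so $u\equiv c$, against a bounded $\sigma$. Moreover, $u$ is not even defined for $t\notin[0,T]$.

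Second, continuity in $\boldsymbol{c}$ at each fixed $(\boldsymbol{x},t)$ does not give joint continuity, which uniform approximation by continuous networks on $K$ requires. For example, let $u_0(x;c)=\psi((x-c)/c^2)$, with $\psi$ a unit bump supported in $(-1,1)$, and set $u_0(\cdot;0)=0$. This datum satisfies the pointwise hypothesis at every $(x,t)$. Yet $u=1$ along $(x,t,c)=(c,0,c)\to 0$.

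Your estimate $\sup|u(\cdot,\cdot,\boldsymbol{c})-u(\cdot,\cdot,\boldsymbol{c}')|\le\sup|u_0(\cdot;\boldsymbol{c})-u_0(\cdot;\boldsymbol{c}')|$ supplies the missing lemma. The price is the extra hypothesis that $\boldsymbol{c}\mapsto u_0(\cdot;\boldsymbol{c})$ is continuous in the sup norm, and the example shows some strengthening of this kind is unavoidable. Citing Leshno--Lin--Pinkus--Schocken is also the right match for a merely non-polynomial $\sigma$, since Cybenko's theorem needs a sigmoidal activation. In short, the paper's route buys brevity; yours buys a statement that is actually true.

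Two small repairs are needed.

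\begin{enumerate}
\item Theorem~\ref{theorem_max_heat} as written is false: it takes $\Omega$ bounded with parabolic boundary only $\Omega\times\{t=0\}$, and $u=x^2+2t$ on $(0,1)$ is a counterexample. So base your stability step on a genuine version. One option is the whole-space Cauchy problem with the growth condition; then $\bar\Omega$ is just a bounded window and continuity up to $\partial\Omega$ is automatic. The other is a bounded $\Omega$ with $\boldsymbol{c}$-independent lateral data, so that $w=0$ on $\partial\Omega\times(0,T)$.
\item In your diffusion-coefficient remark, an interior bootstrap controls only compact subsets of $\Omega$. Instead, compare $\pm w$ with $|r|\,t\,\sup|\Delta u_1|$ via the maximum principle. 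This gives $\sup|w|\le T|r|\sup|\Delta u_1|$ up to the boundary directly, with no energy estimate needed.
\end{enumerate}
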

\begin{proof}
    Let $\boldsymbol{z} = [\boldsymbol{x}^T, t, \boldsymbol{c}^T]^T \in \mathbb{R}^{n+1+m}, K = \Omega\times \mathbb{R}\times\mathbb{R}^{m}$, then $u(\boldsymbol{x}, t, \boldsymbol{c})$ can be regarded as $u(\boldsymbol{z})$, which is continuous on $K$. Since $\sigma$ is a general non-polynomial function, it satisfies the requirements of Cybenko's theorem \cite{cybenko1989approximation}. Define a family of functions
    \begin{equation}
        \mathcal{G} = \Big\{G(\boldsymbol{z}) = \mathop{\sum}\limits_{j=1}^{N}\alpha_j \sigma(\omega_j^T\boldsymbol{z} + b_j): N\in \mathbb{N}, \alpha_j, \omega_j \in \mathbb{R}^{n+1+m}, b_j \in \mathbb{R}\Big\}.
    \end{equation}
    According to Cybenko's theorem or more general Hornik's theorem \cite{hornik1989multilayer}, $\mathcal{G}$ is dense in $C(K)$. That is $\forall \epsilon > 0,$ there exists $G \in \mathcal{G}$, satisfies
    \begin{equation}
        \mathop{sup}\limits_{(\boldsymbol{x}, t, \boldsymbol{c}) \in \mathbb{R}^{n+1+m}} |u(\boldsymbol{x}, t, \boldsymbol{c}) - G(\boldsymbol{x}, t, \boldsymbol{c})| < \epsilon.
    \end{equation}
\end{proof}

Following Theorem $\ref{universal_approximation}$, to ensure the assumptions are satisfied, we assume that we know the specific functional forms of the parameter $a(t,\boldsymbol{x})$ and the initial condition $u_0(\boldsymbol{x})$ in the model. Therefore, we can directly use the variable coefficients in $a(t,\boldsymbol{x})$ and $u_0(\boldsymbol{x})$ as the encoding $\boldsymbol{c}$, because they have unique encoded $a(t,\boldsymbol{x})$ and $u_0(\boldsymbol{x})$. Furthermore, there is a solid theoretical basis to tell us that the solution $u(t,\boldsymbol{x})$ continuously depends on these variables, thus ensuring that the solution $u_{\theta}$ we learned is continuously dependent on the encoding.

To enforce the continuity, we incorporate the differentiability constraints of the solution $u(t,\boldsymbol{x})$ on the variable of parameter $a(t,\boldsymbol{x})$ and the initial condition $u_0(\boldsymbol{x})$ into the loss function. This idea comes from the continuous dependence constraints of ODEs \cite{hartman2002ordinary} and PDEs \cite{evans2022partial}, and has achieved great success in plenty of concrete applications \cite{li2025improving}. We name the model as cd-PINN, whose loss function can be written as
\begin{equation}
    \mathcal{L}(\boldsymbol{\Theta}) = \lambda_{data}\mathcal{L}_{data} + \lambda_{res}\mathcal{L}_{res} + \lambda_{cd}\mathcal{L}_{cd},
\end{equation}
where
\begin{equation}
{\small
    \begin{split}
        & \mathcal{L}_{data} = \frac{1}{N_{data}} \mathop{\sum}\limits_{i=1}^{N_{data}}\Vert(\hat{\boldsymbol{u}}-\boldsymbol{u})(t_i,\boldsymbol{x}_i, \boldsymbol{c}_i)\Vert_2^2\\
        & \mathcal{L}_{res} = \frac{1}{N_r}\mathop{\sum}\limits_{i=1}^{N_r}\bigg\Vert\frac{d\hat{\boldsymbol{u}}}{dt}(t_i, \boldsymbol{x}_i,  \boldsymbol{c}_i) - \boldsymbol{\mathcal{P}}(t_i, \boldsymbol{x}_i, \hat{\boldsymbol{u}}_i, \boldsymbol{c}_i)\bigg\Vert_2^2 + \frac{1}{N_b}\mathop{\sum}\limits_{k=1}^{N_b}\Vert (\hat{\boldsymbol{u}}-\boldsymbol{g})(t_k, \boldsymbol{x}_k, \boldsymbol{c}_k)\Vert_2^2 \\
        & \qquad + \frac{1}{N_{0}}\mathop{\sum}\limits_{j=1}^{N_{0}}\Vert\hat{\boldsymbol{u}}(t_0, \boldsymbol{x}_j, \boldsymbol{c}_j) - \boldsymbol{u}_{0}(\boldsymbol{x}_j,\boldsymbol{c}_j)\Vert_2^2\\
        & \mathcal{L}_{cd} = \frac{1}{N_r}\mathop{\sum}\limits_{i=1}^{N_r}\bigg\Vert\Big(\frac{\partial^2\hat{\boldsymbol{u}}}{\partial \boldsymbol{c}\partial t}-\frac{\partial\boldsymbol{\mathcal{P}}}{\partial \hat{\boldsymbol{u}}}\frac{\partial\hat{\boldsymbol{u}}}{\partial \boldsymbol{c}}-\frac{\partial \boldsymbol{\mathcal{P}}}{\partial \boldsymbol{c}}\Big)(t_i, \boldsymbol{x}_i, \boldsymbol{c}_i)\bigg\Vert_2^2+\frac{1}{N_b}\sum_{k=1}^{N_b}\bigg\Vert\Big(\frac{\partial\hat{\boldsymbol{u}}}{\partial \boldsymbol{c}} - \frac{\partial \boldsymbol{g}}{\partial \boldsymbol{c}}\Big)(t_k, \boldsymbol{x}_k, \boldsymbol{c}_k)\bigg\Vert_2^2\\
        & + \frac{1}{N_0}\mathop{\sum}\limits_{j=1}^{N_0}\bigg\Vert\frac{\partial \hat{\boldsymbol{u}}}{\partial \boldsymbol{c}}(t_0, \boldsymbol{x}_j, \boldsymbol{c}_j) - \frac{\partial \boldsymbol{u}_0}{\partial \boldsymbol{c}}(\boldsymbol{x}_j, \boldsymbol{c}_j)\bigg\Vert_2^2,
    \end{split}}
    \label{loss-cd-PINN}
\end{equation}
where $\boldsymbol{c}$ denotes all the variable coefficients in $\boldsymbol{a}(t,\boldsymbol{x})$ and $\boldsymbol{u}_0(\boldsymbol{x})$. 

% The key difference between PICNO-S and PICNO is the inclusion of a new loss term $\mathcal{L}_{cd}$ that accounts for the continuous dependence (differentiable, to be exact).

It is worth noting that our model has a significant 
superiority over DeepONet and FNO. Once trained, it can be solved quickly in the full-time domain, allowing for arbitrarily high-resolution solutions to be obtained. Compared to the meta-learning-based PINN model, our model does not require fine-tuning and can be deployed immediately after training, similar to recent GPT models \cite{brown2020language, achiam2023gpt} for language modeling.

\section{Conclusion}

In this paper, based on the concept of continuous dependence of the PDE solution on its parameters and initial/boundary values, we propose cd-PINN for solving parameterized PDEs. Compared to existing operator learning frameworks such as DeepONet and FNO, our models require significantly less labeled data. Once trained, the proposed model enables rapid inference of the PDE solution at any point in time and space within the solution domain, thereby achieving high-resolution prediction. Unlike meta-learning-based PINN variants such as MAD-PINN, our proposed model can directly output the solution of a PDE after training without requiring retraining or fine-tuning on new configurations. This capability enables efficient deployment in application domains such as climate prediction and medical imaging, where complex PDEs with varying initial and boundary conditions or parameters need to be solved repeatedly and at different scales.

We numerically demonstrate that the proposed model outperforms benchmark methods such as FNO, DeepONet, and PI-DeepONet under small samples. Additionally, we validate the effectiveness of the explicit encoding and differentiability assumptions, as well as the performance of cd-PINN in high-dimensional problems. In the example of Burgers' equation, we compared the proposed cd-PINN with the Newton-Implicit FDM in terms of computational efficiency, and showed that cd-PINN is more efficient in large-scale simulations. Finally, we evaluate the model on problems where the theoretical assumptions are only approximately satisfied, and find that it remains effective when the solution exhibits transient stability over a short time interval.

There are several promising directions for future work, including extending the model to handle weak solutions of PDEs\cite{kharazmi2019variational}, which are common in problems involving shocks, discontinuous, or other non-smooth behaviors. Addressing such cases would significantly broaden the applicability of the method to real-world phenomena governed by conservation laws or nonlinear dynamics. Another valuable direction is to apply the model to practical domains such as weather forecasting\cite{pathak2022fourcastnet}, where data is often scarce, high-dimensional, and noisy.

\newpage

\backmatter

\bmhead{Code \& Data availability}

The source code and data for this project are available at \url{https://github.com/jay-mini/cd-PINN.git}.

\bmhead{Acknowledgements}

This work was supported by the National Key R\&D Program of China (Grant No. 2023YFC2308702), the National Natural Science Foundation of China (12301617), and the
Guangdong Provincial Key Laboratory of Mathematical and Neural Dynamical Systems (2024B1212010004).

\bmhead{Author Contributions}

Guojie Li: Investigation, Conceptualization, Methodology, Data curation, Formal analysis, Visualization, Writing-original draft. Wuyue Yang: Data Curation, Formal analysis, Visualization, Writing-original draft. Liu Hong: Supervision, Funding Acquisition, Conceptualization, Project Administration, Writing-Review \& Editing. All authors reviewed the manuscript.

\bmhead{Competing Interests}

Authors declare that they have no conflict of interest.

\newpage
\bibliography{sn-bibliography}

%%===========================================================================================%%
%% If you are submitting to one of the Nature Portfolio journals, using the eJP submission   %%
%% system, please include the references within the manuscript file itself. You may do this  %%
%% by copying the reference list from your .bbl file, paste it into the main manuscript .tex %%
%% file, and delete the associated \verb+\bibliography+ commands.                            %%
%%===========================================================================================%%

\end{document}